\newcommand{\D}{\mathbb{D}}
\newcommand{\s}{\mathfrak{s}}
\newcommand{\SU}{{\mathcal{U}}}
\newcommand{\Z}{\mathbb{Z}}
\newcommand{\C}{\mathbb{C}}
\newcommand{\R}{\mathbb{R}}
\renewcommand{\S}{\mathbb{S}}
\newtheorem{proposition}{Proposition}
\newtheorem{theorem}[proposition]{Theorem}
\newtheorem{definition}[proposition]{Definition}
\newtheorem{lemma}[proposition]{Lemma}
\newtheorem{corollary}[proposition]{Corollary}
\begin{document}

\title[Small positive loops of contactomorphisms]{On the non--existence of small positive loops of contactomorphisms on overtwisted contact manifolds}

\subjclass[2010]{Primary: 53D10.}
\date{November, 2013}

\keywords{overtwisted contact structures, positive loops of contactomorphisms, orderability.}

\author{Roger Casals}
\address{Instituto de Ciencias Matem\'aticas -- CSIC.
C. Nicol\'as Cabrera, 13--15, 28049, Madrid, Spain.}
\email{casals.roger@icmat.es}

\author{Francisco Presas}
\address{Instituto de Ciencias Matem\'aticas -- CSIC.
C. Nicol\'as Cabrera, 13--15, 28049, Madrid, Spain.}
\email{fpresas@icmat.es}

\author{Sheila Sandon}
\address{Universit\'{e} de Strasbourg and CNRS, 67084 Strasbourg, France}
\email{sandon@math.unistra.fr}



\begin{abstract}
We prove that on overtwisted contact manifolds there can be no positive loops of contactomorphisms that are generated by a $\mathcal{C}^0$--small Hamiltonian function.
\end{abstract}

\maketitle


\section{Introduction}\label{sec: intro}

In 2000 Eliashberg and Polterovich \cite{EP} noticed that the natural notion of positive contact isotopies, i.e. contact isotopies that move every point in a direction positively trasverse to the contact distri\-bution, induces for certain contact manifolds a partial order on the universal cover of the contactomorphism group. Such contact manifolds are called \textit{orderable}. Since the work of Eliashberg and Polterovich orderability has become an important subject in the study of contact topology. In particular it has been discovered to be deeply related to the contact non--squeezing phenomenon \cite{EKP,Gi} and, more recently, to the non--degeneracy of a natural bi--invariant metric that is defined on the universal cover of the contactomorphism group \cite{CS}. 

As Eliashberg and Polterovich explained, orderability of a contact manifold is equivalent to the non--existence of a positive contractible loop of contactomorphisms. By now many contact manifolds are known to be orderable and many are known not to be, but it is still not well--understood where the boundary between the orderable and non--orderable world lies. In particular it is not known whether there is a relation between overtwistedness and orderability, since not a single overtwisted contact manifold is known to be orderable or not to be. In this article we prove the following result.

\begin{theorem}\label{thm:otnotsmall}
Let $\big(M,\xi = \ker\alpha\big)$ be a closed overtwisted contact 3--manifold. Then there exists a real positive constant $C(\alpha)$ such that any positive loop $\{\phi_{\theta}\}$ of contactomorphisms which is generated by a contact Hamiltonian  $H: M \times\S^1 \longrightarrow \mathbb{R}^+$ satisfies 
$$\|H\|_{\mathcal{C}^0} \geq C(\alpha)\,.$$
\end{theorem}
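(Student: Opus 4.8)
The plan is to show that an overtwisted $(M,\xi)$ contains an overtwisted disk of a definite $\alpha$--size, and that a $\mathcal{C}^0$--small positive loop cannot coexist with it; the constant $C(\alpha)$ will be (a fixed fraction of) that size.

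First I would fix the constant. Since $(M,\xi)$ is overtwisted it contains an overtwisted disk, so --- after a contact isotopy, and by the standard neighbourhood theorem for overtwisted disks --- there is a positively transverse unknot bounding an embedded disk $\Delta$ with self--linking number $\operatorname{sl}(\partial\Delta,\Delta)\geq 0$, in violation of the Bennequin inequality. Such a disk cannot be contained in a Darboux ball, since Darboux balls are tight, and on the closed manifold $M$ the contact Darboux charts for $\alpha$ have uniformly positive radius; one deduces from this that the infimum $c(\alpha)$ of $\int_{\Sigma}d\alpha$ over all embedded disks $\Sigma\subset M$ with positively transverse boundary and $\operatorname{sl}(\partial\Sigma,\Sigma)\geq 0$ is strictly positive. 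Put $C(\alpha):=\tfrac12\,c(\alpha)$; everything so far depends only on $\alpha$.

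Next, let $\{\phi_{\theta}\}$ be a positive loop generated by $H$ with $\|H\|_{\mathcal{C}^0}<C(\alpha)$, and let me record what this buys. For every $p\in M$ the orbit $\theta\mapsto\phi_{\theta}(p)$ is a loop positively transverse to $\xi$ with
\[
\int_{0}^{1}\alpha\Big(\tfrac{d}{d\theta}\phi_{\theta}(p)\Big)\,d\theta \;=\; \int_{0}^{1}H(\phi_{\theta}(p),\theta)\,d\theta \;\leq\; \|H\|_{\mathcal{C}^0} \;<\; C(\alpha),
\]
so the $\mathcal{C}^0$--bound on $H$ is exactly a uniform bound on the $\alpha$--length of every orbit of the loop. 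Dragging a disk $\Delta$ as above around the loop produces $\mathcal{G}\colon\Delta\times\S^1\to M$, $\mathcal{G}(x,\theta)=\phi_{\theta}(x)$, restricting on $\partial(\Delta\times\S^1)=\partial\Delta\times\S^1$ to $G(s,\theta)=\phi_{\theta}(\partial\Delta(s))$. Since the $\phi_{\theta}$ are contactomorphisms and $\partial\Delta$ is positively transverse, $G^{*}\alpha=a\,ds+b\,d\theta$ with $a,b>0$; hence the image torus $T$ is everywhere transverse to $\xi$, its characteristic foliation is nowhere singular, the leaves $\gamma_{\theta}=\phi_{\theta}(\partial\Delta)$ bound the overtwisted disks $\Delta_{\theta}=\phi_{\theta}(\Delta)$, and these sweep out a (possibly singular) solid torus $\mathcal{V}$ whose core is itself an orbit $E$ of the loop.

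The contradiction should now come from confronting two facts about $\mathcal{V}$: each meridian disk $\Delta_{\theta}$ is overtwisted, so the contact planes rotate by more than a half--turn across it; while the whole boundary $T$ and the core $E$ are foliated by orbits of the loop, all positively transverse and of $\alpha$--length $<C(\alpha)$. The aim is to manufacture out of $E$, or out of a suitable leaf of the characteristic foliation of $T$, a positively transverse unknot which bounds an overtwisted disk and whose $\alpha$--length is dominated by some $\int_{0}^{1}H(\phi_{\theta}(\cdot),\theta)\,d\theta$: then the second paragraph forces this $\alpha$--length to be $<\tfrac12\,c(\alpha)$, while the first paragraph forces it to be $\geq c(\alpha)$, which is absurd. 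This last implication is the real work of the argument --- it is where the soft condition ``$\operatorname{sl}\geq 0$'' has to be converted into a quantitative $\alpha$--length estimate, by exploiting positivity to control the characteristic foliation of $T$ relative to its dividing set and the way the overtwisted meridian disks nest inside $\mathcal{V}$ --- and I expect it to be the main obstacle. Note that both hypotheses enter here: in a tight standard neighbourhood there is no overtwisted meridian disk to begin with, and for a merely non--negative loop the orbits foliating $\partial\mathcal{V}$ need no longer be transverse, so the whole characteristic--foliation mechanism on $T$ collapses.
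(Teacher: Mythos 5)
Your approach is genuinely different from the paper's --- the paper embeds $(M,\xi)$ with trivial symplectic normal bundle into an exact symplectically fillable contact $5$--manifold (using Lutz twists and contact surgery to arrange triviality of the normal bundle, Proposition \ref{prop:symptrivnormal}), then shows via contact parallel transport that a loop generated by a Hamiltonian with $\|H\|_{\mathcal{C}^0}<\delta^2$ produces a plastikstufe inside the standard $\delta$--neighborhood $M\times\D^2(\delta)$ (Proposition \ref{prop: construction of PS}), contradicting Niederkr\"uger's filling obstruction (Theorem \ref{thm:psnotfill}); the constant $C(\alpha)$ is the square of the neighborhood radius. You instead attempt to stay in dimension $3$ and derive a contradiction directly from the interaction between an overtwisted disk and the orbits of the loop. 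Unfortunately the proposal has two gaps that are not filled.

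First, the positivity of $c(\alpha)$ is asserted, not proved. The observation that a Bennequin--violating disk cannot lie in a Darboux ball does not imply a positive lower bound on $\int_\Sigma d\alpha=\int_{\partial\Sigma}\alpha$: a disk can have small $d\alpha$--area while extending far beyond any Darboux ball, and indeed the boundary of an actual overtwisted disk is Legendrian, so its enclosed $d\alpha$--area is exactly zero. Whether a definite lower bound exists for transverse unknots with $\operatorname{sl}\geq 0$ is a real question that your argument does not address, and it is exactly the kind of quantitative statement that the paper's approach sidesteps by measuring the constant in the ambient fillable $5$--manifold rather than inside $M$.

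Second, and decisively, the contradiction itself is left open. You write that ``the real work of the argument'' is to convert the soft condition $\operatorname{sl}\geq 0$ on the meridian disks into a quantitative $\alpha$--length estimate on the torus $T$, and that you ``expect it to be the main obstacle.'' That is the theorem; the preceding setup (the bound $\int_0^1 H\,d\theta\leq\|H\|_{\mathcal{C}^0}$, the torus $G^*\alpha=a\,ds+b\,d\theta$ with $a,b>0$, the swept solid torus $\mathcal{V}$) is correct as far as it goes, but it stops exactly where the argument would have to begin. There is also no control on whether the disks $\Delta_\theta=\phi_\theta(\Delta)$ actually foliate an embedded solid torus rather than an immersed or singular region, which would be needed to run any characteristic--foliation/dividing--set argument on $T$. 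As it stands the proposal is a plausible plan of attack rather than a proof, and the crux step remains unestablished.
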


In other words, on closed overtwisted contact 3--manifolds there are no positive loops of contactomorphisms that are generated by a $\mathcal{C}^0$--small contact Hamiltonian. Note that there is no loss of generality in assuming the contact Hamiltonian to be 1-periodic, see Lemma 3.1.A in \cite{EP}. It is important to notice that our result does not imply that overtwisted contact manifolds are orderable, because the contraction of a positive contractible loop of contactomorphisms is not necessarily performed via positive loops. For instance, it was even proved in \cite{EKP} that for the standard tight contact sphere any contraction of a positive contractible loop must be sufficiently negative somewhere. Theorem \ref{thm:otnotsmall} states though that there exists a lower bound for a Hamiltonian function that generates a positive loop of contactomorphisms. Intuitively, in the presence of an overtwisted disc a positive isotopy returning to the identity requires a minimal amount of energy.

The specificity of our result is that we deal with \textit{$\mathcal{C}^0$--small} contact Hamiltonians. Indeed, let us prove that the non--existence of a positive loop of contactomorphisms that is generated by a \textit{$\mathcal{C}^1$--small} Hamiltonian holds on any contact manifold. Consider first the $\mathcal{C}^2$--small case. If the Hamiltonian $H_{\theta}: M \longrightarrow \mathbb{R}$ is $\mathcal{C}^2$--small then the generated loop $\{\phi_{\theta}\}$ is $\mathcal{C}^1$--small and so the contact graphs\footnote{See for example \cite{S,CS} for the definition of contact graphs, contact products and more details on arguments similar to the one that follows.} $\text{gr}(\phi_{\theta})$ are Legendrian sections in a Weinstein neighborhood of the diagonal $\Delta$ in the contact product $M \times M \times \mathbb{R}$. Since a Weinstein neighborhood is contactomorphic to a neighborhood of the zero section of $J^1(\Delta)=J^1(M)$, the graphs $\text{gr}(\phi_{\theta})$ are of the form $\{j^1f_{\theta}\}$ for a family of smooth functions $f_{\theta}$ on $M$. Because of the Hamilton--Jacobi equation (see \cite[Section 46]{Ar}), positivity of the loop $\{\phi_{\theta}\}$ implies that the family $f_{\theta}$ is strictly increasing, yielding a contradiction. If the Hamiltonian function is only $\mathcal{C}^1$--small, and thus the loop $\{\phi_{\theta}\}$ is $\mathcal{C}^0$--small, then the graphs $\text{gr}(\phi_{\theta})$ are still contained in a Weinstein neighborhood of the diagonal in the contact product but they are not necessarily sections anymore, and so they cannot be written as 1-jet of functions. However it follows from Chekanov theorem \cite{Che,Ch} that they have generating functions quadratic at infinity and so an argument similar to the one above (or the results in \cite{CFP,CN}) allows to conclude also in this case. As far as we know, Theorem \ref{thm:otnotsmall} is the first result in the literature that shows the non--existence of a positive loop in the case when the Hamiltonian is $\mathcal{C}^0$--small. Our proof strongly uses overtwistedness in several points, and does not give an intuition of whether or not the result should also be true for tight contact manifolds. However it seems plausible to us that this might be the case.

Although Theorem \ref{thm:otnotsmall} only applies to overtwisted contact $3$--manifolds, a higher--dimensional analogue can also be stated. The careful reader can try to generalize the result to non--fillable contact manifolds containing a PS--structure \cite{Ni,Pr}, a GPS--structure \cite{NP} or a blob \cite{MNW} with the appropriate hypotheses on the Chern class of the contact distributions. The precise statement is not part of this article due to its technicality and to the fact that no new geometric ideas are required for the argument.

As a consequence of Theorem \ref{thm:otnotsmall}, we can bound from below not only the supremum norm of the Hamiltonian of a positive loop but also its $L^1$-norm, in the following sense.

\begin{corollary}\label{cor:otnotsmall}
Let $\big(M,\xi=\ker\alpha\big)$ be a closed overtwisted contact 3--manifold. Then there exists a real positive constant $C(\alpha)$ such that any positive loop of contactomorphisms $\{\phi_\theta\}$ which is generated by a contact Hamiltonian $H_{\theta}$, $\theta\in\S^1$, satisfies
$$\int_0^1 \| H_{\theta}\|_{\mathcal{C}^0}\,d\theta \geq C(\alpha)\,.$$
\end{corollary}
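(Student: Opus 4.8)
The plan is to deduce the corollary from Theorem \ref{thm:otnotsmall} by a time reparametrisation of the loop: given a positive loop whose Hamiltonian has small $L^1$-norm, one slows the loop down where $\|H_\theta\|_{\mathcal{C}^0}$ is large and speeds it up where it is small, producing a new positive loop whose $\mathcal{C}^0$-norm is close to $\int_0^1\|H_\theta\|_{\mathcal{C}^0}\,d\theta$, and then one invokes Theorem \ref{thm:otnotsmall} for this new loop.

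In detail, I would argue as follows. Let $\{\phi_\theta\}_{\theta\in\S^1}$ be a positive loop generated by $H_\theta$, let $X_\theta$ denote the associated contact vector field, and set $g(\theta):=\|H_\theta\|_{\mathcal{C}^0}=\max_{x\in M}H(x,\theta)$. Since $M\times\S^1$ is compact and the loop is positive, $g$ is continuous and bounded below by a positive constant. Fix $\varepsilon>0$ and pick, by mollifying $g$ and adding a small constant, a smooth $1$-periodic function $\rho$ with $\rho\geq g$ everywhere and $N:=\int_0^1\rho\,d\theta\leq \int_0^1 g\,d\theta+\varepsilon$. Setting $R(\theta):=\int_0^\theta\rho$ one has $R(\theta+1)=R(\theta)+N$, so $\sigma(t):=R^{-1}(Nt)$ descends to an orientation-preserving diffeomorphism of $\S^1$ with $\sigma'(t)=N/\rho(\sigma(t))>0$. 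The reparametrised family $\psi_t:=\phi_{\sigma(t)}$ is again a loop of contactomorphisms; from $\frac{d}{dt}\psi_t=\sigma'(t)\,X_{\sigma(t)}\circ\psi_t$ its contact Hamiltonian is $K_t=\sigma'(t)\,H_{\sigma(t)}$, which is $1$-periodic and strictly positive, so $\{\psi_t\}$ is a positive loop. By the choice of $\rho$,
$$\|K_t\|_{\mathcal{C}^0}=\sigma'(t)\,g(\sigma(t))=\frac{N}{\rho(\sigma(t))}\,g(\sigma(t))\leq N\leq\int_0^1\|H_\theta\|_{\mathcal{C}^0}\,d\theta+\varepsilon,$$
and Theorem \ref{thm:otnotsmall} applied to $\{\psi_t\}$ gives $C(\alpha)\leq\|K\|_{\mathcal{C}^0}\leq\int_0^1\|H_\theta\|_{\mathcal{C}^0}\,d\theta+\varepsilon$; since $\varepsilon$ is arbitrary this yields the claim with the same constant $C(\alpha)$ as in Theorem \ref{thm:otnotsmall}.

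The conceptual content is the elementary scaling identity that rescaling time by $\sigma'$ rescales the contact Hamiltonian by $\sigma'$, which makes the supremum norm of a loop a reparametrisation-dependent quantity whose infimum over time reparametrisations is controlled by the $L^1$-norm. I expect the only mildly technical point to be the regularity bookkeeping — $g$ is a priori only continuous, so one must pass to a smooth positive majorant $\rho$ before inverting $R$, and check that $\sigma$ is genuinely a smooth diffeomorphism of the circle — but this costs only an arbitrarily small $\varepsilon$ and presents no real obstacle.
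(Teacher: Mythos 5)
Your proof is correct and follows essentially the same reparametrisation idea as the paper: the paper sets $\dot\beta(\theta) = \|H_\theta\|_{\mathcal{C}^0}/\int_0^1\|H_\theta\|_{\mathcal{C}^0}\,d\theta$ and uses $\phi_\theta=\psi_{\beta(\theta)}$, whereby the reparametrised Hamiltonian has constant $\mathcal{C}^0$-norm exactly equal to the $L^1$-norm of $H$, and then invokes Theorem~\ref{thm:otnotsmall}. Your version differs only in that you first pass to a smooth positive majorant $\rho$ of $g(\theta)=\|H_\theta\|_{\mathcal{C}^0}$ before inverting the time change, paying an arbitrary $\varepsilon$; this is a small but genuine improvement, since $g$ is a priori only continuous, so the paper's $\beta$ is only $\mathcal{C}^1$ and one must check that the reparametrised family is still an admissible loop, a point the paper leaves implicit.
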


Corollary \ref{cor:otnotsmall} can be deduced from Theorem \ref{thm:otnotsmall} as follows. Suppose that there is a positive loop $\{\phi_\theta\}$ which is generated by a contact Hamiltonian $H_{\theta}$ that satisfies
$$\int_0^1 \| H_{\theta}\|_{\mathcal{C}^0}\,d\theta \leq C(\alpha)\,.$$
Define a reparametrization $\beta: [0,1] \rightarrow [0,1]$ of the time--coordinate by requiring
$
\dot{\beta}(\theta) = \frac{\| H_{\theta}\|_{\mathcal{C}^0}}{\int_0^1 \| H_{\theta}\|_{\mathcal{C}^0}\,d\theta}
$
and write $\phi_{\theta} = \psi_{\beta(\theta)}$. Then $H_{\theta} = \dot{\beta}(\theta) G_{\beta(\theta)}$ where $G_{\beta(\theta)}$ is the Hamiltonian of the reparametrized loop $\psi_{\beta(\theta)}$. For all $\theta \in \S^1$ we then have 
$$
\max_{x} G_{\beta(\theta)}(x) = \frac{\int_0^1 \| H_{\theta}\|_{\mathcal{C}^0}\,d\theta}{\| H_{\theta}\|_{\mathcal{C}^0}}\| H_{\theta}\|_{\mathcal{C}^0} = \int_0^1 \| H_{\theta}\|_{\mathcal{C}^0}\,d\theta \leq C_{\alpha}
$$
contradicting Theorem \ref{thm:otnotsmall}.

The geometric core of the proof of Theorem \ref{thm:otnotsmall} can be shortly described in two parts. First, any overtwisted contact manifold $(M,\xi)$ can be embedded with trivial symplectic normal bundle in an exact symplectically fillable contact 5--manifold $(X,\xi_X)$. Second, the existence of a small positive loop of contactomorphisms on $(M,\xi)$ implies the existence of a PS--structure on $(X,\xi_X)$. This yields a contradiction, according to the main result of \cite{Ni}. The construction of a PS--structure on $X$ is based on techniques similar to those used by Niederkr\"{u}ger and the second author \cite{NP} to study the size of tubular neighborhoods of contact submanifolds.

The paper is organized as follows. Section \ref{sec: prelim} recalls basic definitions and facts about overtwisted contact manifolds. In Section \ref{sec: contact fibrations} we explain how to construct a PS--structure in the total space of the contact fibration $M \times \mathbb{D}^2$, where $M$ is an overtwisted contact 3--manifold, starting from a small positive loop of contactomorphisms of $M$. Theorem \ref{thm:otnotsmall} is proved in Section \ref{sec: proof} assuming an embedding result that will be proved in Section \ref{sec: embedding}. 

{\bf Acknowledgments.} This work started five years ago as an attempt to prove that overtwisted contact manifolds are orderable. Many visits at that time of the third author to Madrid were supported by the CAMGSD of the IST Lisbon and by the CSIC-IST joint project 2007PT0014. More recently, we wish to thank the participants of the AIM Workshop \textit{Contact topology in higher dimensions} in May 2012 for discussions that encouraged us to pursue and write down this partial result. The article was written during the stay of the third author at the UMI--CNRS of the CRM Montr\'{e}al, and she would like to thank Laurent Habsieger, Fran\c{c}ois Lalonde and Octav Cornea for their support and hospitality. The first author is grateful to A. Zamorzaev. The present work is part of the authors activities within CAST, a Research Network Program of the European Science Foundation. The third author is also supported by the ANR grant COSPIN.

\section{Preliminaries on overtwisted contact manifolds}\label{sec: prelim}

We refer to the book of Geiges \cite{Ge} for an introduction to Contact Topology, and recall here only the definitions and facts about overtwisted contact manifolds that will be needed in the rest of the article. A 3--dimensional contact manifold $(M,\xi)$ is said to be \textit{overtwisted} if it contains an overtwisted disc, i.e. an embedded 2--disk $\Delta$ such that the characteristic foliation $T\Delta\cap\xi$ contains a unique singular point in the interior of $\Delta$ and $\partial\Delta$ is the only closed leaf of this foliation. A contact manifold is said to be tight if it is not overtwisted. 

As follows from the results of Lutz and Martinet \cite{Lu, Ma}, there exists an overtwisted contact structure in any homotopy class of $2$--plane fields. Moreover, by the classification of overtwisted contact structures achieved by Eliashberg \cite{E2}, we also know that on a given homotopy class of $2$--plane fields there exists exactly one overtwisted contact structure. More precisely we have the following result.

\begin{theorem}[\cite{E2}]\label{thm:classification of ot}
Let $\xi$ and $\xi'$ be overtwisted contact structures on a 3--dimensional manifold $M$, and suppose that they are homotopic as $2$--plane fields. Then $\xi$ and $\xi'$ are isotopic contact structures.
\end{theorem}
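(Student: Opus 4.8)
The plan is to deduce the statement from a relative, parametric $h$--principle in which the overtwisted disc functions as a reservoir of flexibility; the theorem itself is the injectivity half of the bijection ``isotopy classes of overtwisted contact structures $\leftrightarrow$ homotopy classes of $2$--plane fields'', surjectivity being the already quoted result of Lutz and Martinet. Fix once and for all a standard model $(U_0,\zeta_0)$ of a neighborhood of an overtwisted disc, with $U_0$ a ball. What I would actually prove is that the inclusion of the space of contact structures on $M$ agreeing with $\zeta_0$ near $U_0$ into the space of $2$--plane fields with the same constraint is a weak homotopy equivalence; Theorem~\ref{thm:classification of ot} is its consequence on $\pi_0$.

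The first step is to \emph{normalize near an overtwisted disc}. Since $\xi$ and $\xi'$ are overtwisted they each contain an overtwisted disc, and by uniqueness of tubular neighborhoods of overtwisted discs there are contact embeddings $(U_0,\zeta_0)\hookrightarrow(M,\xi)$ and $(U_0,\zeta_0)\hookrightarrow(M,\xi')$. Using the hypothesized plane--field homotopy $\xi_t$ one arranges, after an ambient isotopy, that these embeddings have a common image $U_0\subset M$ and that $\xi|_{U_0}=\xi'|_{U_0}=\zeta_0$; as $U_0$ is contractible and the space of co-oriented $2$--planes in $\R^3$ is the $2$--sphere, hence simply connected, the path $\xi_t$ can then be deformed rel endpoints so as to be stationary and equal to $\zeta_0$ on a neighborhood of $U_0$. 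One is thereby reduced to promoting a path of plane fields on the compact manifold with boundary $W=M\setminus\operatorname{int}U_0$, constant near $\partial W$, to a path of contact structures rel a neighborhood of $\partial W$. Granting this, gluing $\zeta_0$ back in over $U_0$ gives a path of contact structures on the closed manifold $M$ joining $\xi$ to $\xi'$, and Gray stability converts it into an ambient isotopy $\psi_t$ with $(\psi_1)_\ast\xi=\xi'$, which is the asserted isotopy of contact structures.

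The core of the proof is the extension problem on $W$, which I would organize as an induction over the handles of a handle decomposition of $W$ relative to a collar of $\partial W$. On neighborhoods of the $0$-- and $1$--handles the contact condition is flexible by Gromov's $h$--principle for contact structures on open $3$--manifolds, so the path extends there without obstruction; the difficulty is concentrated in the $2$--handles and $3$--handles. Over a $2$--handle one must unwind the twisting of the plane--field path along the attaching region; over a $3$--handle one must extend a germ of contact structure on $S^2=\partial D^3$ across the ball so as to realize a prescribed plane field, a problem governed by the characteristic foliation on the boundary sphere and by Eliashberg's analysis of contact structures on the ball. The mechanism that makes both extensions go through is overtwistedness: whenever the extension is obstructed, one pipes an embedded overtwisted disc into the offending cell through a thin tube and performs there a localized Lutz--type modification, which changes the contact structure by exactly the required amount while leaving the formal homotopy class of the plane--field path unchanged.

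The main obstacle is precisely this last mechanism: showing that the cell-by-cell obstructions are always realized by the effect of inserting an overtwisted disc, and that such an insertion does not disturb the plane--field homotopy we are tracking. This requires careful bookkeeping of the invariants of the characteristic foliations on the boundary spheres of the cells --- rotation numbers and Bennequin-type invariants --- together with a delicate compatibility analysis when passing from one handle to the next. This is the technical heart of Eliashberg's argument; by contrast the normalization step and the final reduction via Gray stability are routine.
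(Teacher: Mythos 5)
This statement is not proved in the paper: it is Eliashberg's classification of overtwisted contact structures on closed oriented $3$--manifolds, imported by citation and used as a black box (in Lemma~\ref{lem:formLutz} and in the closing step of the proof of Proposition~\ref{prop:symptrivnormal}). The attribution in the theorem header points to the survey \cite{E2}; the result itself is from \cite{E1}. There is consequently no in--paper argument against which to measure your proposal.

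As a sketch of Eliashberg's actual proof, your outline captures the correct high--level structure: normalize $\xi$ and $\xi'$ so that they agree with a fixed model $\zeta_0$ on a common neighborhood $U_0$ of an overtwisted disc, deform the plane--field homotopy rel endpoints to be constant over $U_0$, solve a relative extension problem over $W=M\setminus\operatorname{int}U_0$ using the overtwisted disc as a source of flexibility, and finish with Gray stability. Two caveats on the middle step, where you replace the real work with slogans. First, Eliashberg's 1989 argument is not organized as a cell--by--cell $h$--principle over a handle decomposition of $W$; its technical core is a classification of contact structures on the ball $D^3$ with a prescribed germ along $\partial D^3$: if the germ is overtwisted, the contact structures on $D^3$ extending it are classified up to isotopy rel boundary by their relative homotopy class as plane fields. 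The analysis is governed by the characteristic foliation on the boundary $2$--sphere, the elimination lemma for singularities, and a delicate control of closed leaves. Second, the phrase about piping an overtwisted disc into an offending cell names the intuition but not the mechanism; what actually has to be shown is that the presence of an overtwisted disc near the boundary sphere renders the extension insensitive to the formal choices, and this is exactly the bookkeeping of characteristic--foliation invariants that you acknowledge but defer. Your framing of the theorem as a weak homotopy equivalence between spaces of contact structures and spaces of plane fields, both constrained near $U_0$, is the later higher--dimensional packaging; in dimension $3$ it is equivalent on $\pi_0$ to Eliashberg's original statement, so it is a legitimate reformulation rather than a gap, but one should not mistake the clean statement for the content of the proof.
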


The notion of an overtwisted contact structure does not readily generalize to higher--dimensional contact manifolds. The following geometric model was proposed by Niederkr\"{u}ger \cite{Ni}. 

\begin{definition}
Let $(M,\xi)$ be a contact $5$--manifold. A \emph{plastikstufe} PS$(\S^1)$ in $M$ with singular set $\S^1$ is an embedding of a solid torus
$$\iota:\D^2\times\S^1\longrightarrow M$$
with the following properties:
\begin{itemize}
\item[a.] The boundary $\partial\D^2\times\S^1$ is the unique closed leaf of the foliation $\ker(\iota^*\alpha)$ on $\D^2\times\S^1$.
\item[b.] The interior of $\D^2\times\S^1$ is foliated by an $\S^1$--family of stripes $(0,1)\times\S^1$ spanned between $\S^1\times\{0\}$ and asymptotically approaching $\partial\D^2\times\S^1$ on the other side.
\end{itemize}
\end{definition}

In particular, Property a. implies that the boundary of the solid torus is a Legendrian torus and the core $\{0\}\times\S^1$ is transverse to the contact distribution $\xi$.

\begin{figure}[H]
\includegraphics[scale=0.55]{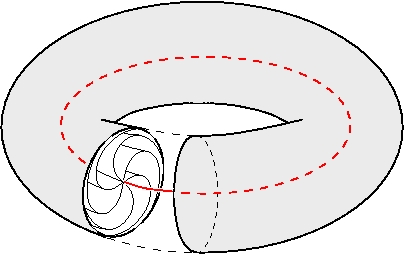} \\
\caption{{\small An embedded PS--structure in a contact 5--fold.}} \label{fig:plastik}
\end{figure}

A plastikstufe is also referred to in the literature as a PS--structure.

By results of Gromov and Eliashberg \cite{Gr,E3}, in dimension $3$ the presence of an overtwisted disc obstructs the existence of symplectic fillings. The higher--dimensional analogue of this fact is the following theorem by Niederkr\"{u}ger.

\begin{theorem}[\cite{Ni}]\label{thm:psnotfill}
Let $(M,\xi)$ be a contact $5$--manifold with a PS--structure. Then $M$ does not admit an exact symplectic filling.
\end{theorem}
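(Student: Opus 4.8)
The plan is to argue by contradiction along the lines of Gromov's Bishop--family technique, the mechanism by which Gromov and Eliashberg showed that an overtwisted disc obstructs fillings in dimension three. Suppose $(W,\omega=d\lambda)$ is an exact symplectic filling of $(M,\xi)$ with $\lambda|_M$ a contact form for $\xi$, complete it to $\hat W=W\cup\big([0,\infty)\times M\big)$ with the cylindrical extension of $\omega$, and write $\mathcal{P}=\iota(\D^2\times\S^1)\subset M\subset\hat W$ for the plastikstufe, which now sits in the interior. Fix an almost complex structure $J$ on $\hat W$ compatible with $\omega$, cylindrical on the end so that the maximum principle confines $J$--holomorphic curves to a compact region, and adapted near $\mathcal{P}$ so that a standard Bishop family exists. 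The geometric input I would read off from the definition is that, away from the singular circle $\{0\}\times\S^1$, the distribution $\ker(\iota^*\alpha)$ integrates to a \emph{Legendrian} foliation of $\mathcal{P}$: on each leaf $L$ one has $\alpha|_L=0$, hence also $d\alpha|_L=0$, so any $J$--holomorphic disc whose boundary lies in a single leaf has vanishing symplectic area and is therefore constant. Non--constant filling discs must sweep across the foliation.

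Next I would set up the moduli space $\mathcal{M}$ of somewhere injective, suitably normalized $J$--holomorphic discs $u:(\D^2,\partial\D^2)\to(\hat W,\mathcal{P})$ in the relative class of a small half--disc attached to $\mathcal{P}$ near the singular circle. Two standard facts are needed. First, transversally to $\S^1$ the contact geometry near the core is modelled on the elliptic singularity of the characteristic foliation of an overtwisted disc; exactly as in dimension three this produces a half--open one--parameter Bishop family $\{u_t\}_{t\in(0,\varepsilon)}$ of such discs collapsing onto the core as $t\to 0$, which is a noncompact end of $\mathcal{M}$. Second, after a generic perturbation of $J$ supported away from that end — the Bishop discs themselves being small, embedded and automatically regular — the space $\mathcal{M}$ is a smooth one--dimensional manifold.

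The heart of the matter is Gromov compactness for $\mathcal{M}$. A uniform energy bound comes from exactness: if $D\subset\mathcal{P}$ is a disc bounded by $u(\partial\D^2)$ in the relevant class, then $\int u^*\omega=\int_{\partial\D^2}u^*\lambda=\int_D d\alpha$, bounded by the finite $d\alpha$--area of the compact plastikstufe. Sphere bubbling is excluded because an $\omega$--exact manifold carries no nonconstant holomorphic spheres; escape into the cylindrical end is excluded by the maximum principle; and disc bubbling off $\mathcal{P}$ is excluded by the leafwise triviality noted above together with a minimal--area (monotonicity) estimate. Finally, and crucially, the boundary loops $u(\partial\D^2)$ cannot run into the unique closed leaf $\partial\D^2\times\S^1$ nor otherwise leave the foliated region: a boundary maximum principle (Hopf lemma) for the $\bar\partial$--equation near the closed Legendrian leaf, combined with positivity of intersection of $u(\partial\D^2)$ with the leaves of the foliation, confines the boundaries to the open strips. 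With all this in place the compactified $\overline{\mathcal{M}}$ would be a compact one--manifold whose only end/boundary is the Bishop family and which has no other boundary component — impossible. Hence no exact symplectic filling can exist.

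I expect the compactness step — in particular the trapping of the disc boundaries away from the closed leaf and the exclusion of bubbling along $\mathcal{P}$ — to be the main obstacle, since this is exactly where the global structure of a plastikstufe (a Legendrian foliation by strips limiting onto a single closed leaf) is essential, and it is the higher--dimensional counterpart of the reason an overtwisted disc obstructs fillings in dimension three. A secondary difficulty is transversality and the dimension count, because the boundary condition is a $3$--dimensional submanifold that is only \emph{leafwise} Legendrian rather than an honest Lagrangian; I would handle this by imposing the leafwise boundary condition and exploiting the approximate $\S^1$--invariance of the geometry near the core.
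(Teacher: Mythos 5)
This statement is not proved in the paper: it is quoted directly from Niederkr\"uger's paper \cite{Ni} and used as a black box, so there is no ``paper's own proof'' to compare against. Your sketch, however, does reproduce the outline of Niederkr\"uger's original argument, which is indeed the Gromov--Eliashberg Bishop-family scheme: complete the filling to a cylindrical end, pick a cylindrical $J$ adapted near the plastikstufe, generate the Bishop family of small holomorphic discs emanating from the singular circle, use exactness and the Legendrian foliation of $\mathcal{P}$ for an a priori energy bound, establish Gromov compactness of the moduli space by ruling out sphere and disc bubbling and showing the boundaries cannot reach the closed Legendrian leaf, and derive a contradiction from a compact $1$--manifold with a single open end.

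A few technical points worth flagging if you intend to turn the sketch into a proof. First, the leaves of the foliation are strips $(0,1)\times\S^1$, not discs, so the statement ``boundary on a leaf $\Rightarrow$ zero area $\Rightarrow$ constant'' should be phrased carefully: the correct observation is that $\alpha$ vanishes \emph{pointwise} on each leaf, hence $\int_{\partial\D^2}u^*\lambda=0$ without any appeal to contractibility in the leaf; one also needs to fix the totally real boundary condition as ``$u(\partial\D^2)$ lies in one leaf.'' Second, the confinement of boundaries away from $\partial\D^2\times\S^1$ is not just a boundary maximum principle near a Lagrangian; it uses that the closed leaf is a genuine Legendrian torus together with a strong maximum principle for the plurisubharmonic function controlling the ``radial'' coordinate on the plastikstufe, and this is where the definition's requirement that the closed leaf be the only limit set of the strips is used. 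Third, Fredholm theory with a boundary condition that is leafwise Legendrian but only a totally real (not Lagrangian) submanifold of $\hat W$ requires the tame/automatic transversality for small Bishop discs and a genericity argument away from them, exactly as you note; this is where the dimension count producing a $1$--dimensional moduli space is checked. With those adjustments your outline matches Niederkr\"uger's proof in \cite{Ni}.
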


As we will explain, the argument used to prove Theorem \ref{thm:otnotsmall} is based on the insertion of a PS--structure in an exact symplectically fillable manifold, thus yielding a contradiction with Theorem \ref{thm:psnotfill}. The techniques that provide such embedding are based on the study of certain contact structures on the manifold $M\times\D^2$. This will be explained in the next section.

\section{PS--structures and contact fibrations}\label{sec: contact fibrations}

In the first part of this section we will recall, following Lerman \cite{Le} and \cite{Pr}, the notion of a contact fibration and its relation to the group of contactomorphisms via the monodromy diffeomorphism. We will then show in Proposition \ref{prop: construction of PS} how to apply these concepts to construct a PS--structure on the total space of the contact fibration $M \times \mathbb{D}^2$, starting from a sufficiently small positive loop of contactomorphisms of $M$.

A smooth fiber bundle $\pi:X\longrightarrow B$ is said to be a \textit{contact fibration} if there exists a hyperplane distribution $\xi_X = \ker\alpha_X$ on $X$ such that its restriction $\xi = \ker (\pi) \cap \xi_X$ defines a contact structure in each fiber. In particular $\big(\xi,d \alpha_X|_{\ker(\pi)}\big)$ is a symplectic subbundle of the --not necessarily symplectic-- bundle $\xi_X$. This data leads to a natural choice of connection.

\begin{definition}
Let $\pi:\big(X,\xi_X = \ker\alpha_X\big)\longrightarrow B$ be a contact fibration. Then the distribution $\xi^{\perp d\alpha_X}\subset \xi_X$ is called the \emph{contact connection} associated to the contact fibration.
\end{definition}

In other words, for a point $p$ of $B$ and a tangent vector $v \in T_pB$, the horizontal lift of $v$ at some $\tilde{p} \in \pi^{-1}(p)$ with respect to the contact connection is the unique vector $\tilde{v} \in T_{\tilde{p}}X$ such that $\pi_{\ast} \tilde{v} = v$, $\tilde{v} \in \ker (\xi_X)$ and $\iota_{\tilde{v}}d\alpha_X = 0$ on $\xi$. Note that the contact connection only depends on $\xi_X$, not on the choice of the 1--form $\alpha_X$ with $\xi_X = \ker\alpha_X$. The parallel transport along a segment joining two points $q,p\in B$ is defined as in the smooth case, but in the contact framework it is enhanced from a diffeomorphism to a contactomorphism between the fibers of $q$ and $p$. Moreover, the definition of the contact connection implies that the trace by parallel transport of a submanifold that is tangent to the contact structure on the fibers is also tangent to the distribution on the total space. A precise statement of these properties is the content of the following proposition.

\begin{proposition}\label{pro:contcon}$($\cite{Le,Pr}$)$
Let $\pi:\big(X,\xi_X=\ker\alpha_X\big)\longrightarrow B$ be a contact fibration with closed fibers. Consider a point $p\in B$ and an immersed path $\gamma:[0,1]\longrightarrow B$ with $\gamma(0) = p$. Then parallel transport along $\gamma$ with respect to the contact connection defines a path of diffeomorphisms
$$\widetilde{\gamma}_t:\pi^{-1}(p)\longrightarrow\pi^{-1}(\gamma(t))$$
with the following properties:
\begin{itemize}
\item[a.] The diffeomorphisms $\widetilde{\gamma}_t$ are contactomorphisms.
\item[b.] Let L be an isotropic submanifold of $\pi^{-1}(p)$ and consider the map
$$\mathfrak{t}:L\times[0,1]\longrightarrow X,\quad (p,t)\longmapsto \widetilde{\gamma}_t(p),$$
then $im(\mathfrak{t})$ is an immersed isotropic submanifold of $(X,\xi_X)$. It is an embedded isotropic submanifold if $\gamma$ is an embedded path.
\end{itemize}
\end{proposition}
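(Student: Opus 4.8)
The plan is to realise parallel transport along $\gamma$ as the flow of the horizontal lift of $\dot\gamma$ and then to exploit two structural features of this lift. Write $Z_t$ for the horizontal lift of $\dot\gamma(t)$ with respect to the contact connection; by definition it is characterised at each $x\in\pi^{-1}(\gamma(t))$ by $\pi_*Z_t=\dot\gamma(t)$, by $Z_t\in\xi_X=\ker\alpha_X$, and by $\iota_{Z_t}d\alpha_X=0$ on the vertical contact bundle $\xi$. Since the fibres are closed, $Z_t$ integrates to the parallel transport diffeomorphisms $\widetilde\gamma_t$ for all $t\in[0,1]$ (the Ehresmann argument, applied to the pull-back fibration $\gamma^{*}X\to[0,1]$, whose total space is $\pi^{-1}(\gamma([0,1]))$ when $\gamma$ is embedded); this yields the family of diffeomorphisms in the statement, so only properties a and b remain. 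The two facts to use are: (i) $Z_t$ takes values in $\ker\alpha_X$, hence $\iota_{Z_t}\alpha_X=0$; and (ii) $Z_t$ is $d\alpha_X$-orthogonal to the vertical contact distribution $\xi$.

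For part a I would compute the Lie derivative of $\alpha_X$ along $Z_t$, restricted to the fibres. By Cartan's formula and (i), $\mathcal{L}_{Z_t}\alpha_X=\iota_{Z_t}d\alpha_X$. Fix $t$ and a vertical vector $v\in T_x\pi^{-1}(\gamma(t))$, and split $v=v_0+\alpha_X(v)\,R_t$, where $R_t$ is the Reeb field of the fibrewise contact form $j_t^*\alpha_X$ and $v_0\in\xi_x$. Then $d\alpha_X(Z_t,v)=d\alpha_X(Z_t,v_0)+\alpha_X(v)\,d\alpha_X(Z_t,R_t)$, and the first term vanishes by (ii); hence $\mathcal{L}_{Z_t}\alpha_X$ restricted to the fibre over $\gamma(t)$ equals $h_t\cdot j_t^*\alpha_X$ with $h_t:=d\alpha_X(Z_t,R_t)$ a smooth function on the fibre. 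Pulling back by $\widetilde\gamma_t$ and differentiating in $t$ then gives the linear ODE $\tfrac{d}{dt}\bigl(\widetilde\gamma_t^{\,*}j_t^*\alpha_X\bigr)=(h_t\circ\widetilde\gamma_t)\,\widetilde\gamma_t^{\,*}j_t^*\alpha_X$, whose solution is $\widetilde\gamma_t^{\,*}j_t^*\alpha_X=\exp\!\bigl(\int_0^t h_s\circ\widetilde\gamma_s\,ds\bigr)\,j_0^*\alpha_X$; in particular $\widetilde\gamma_t$ carries the contact structure of $\pi^{-1}(p)$ to that of $\pi^{-1}(\gamma(t))$, which is property a.

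For part b, consider $\mathfrak{t}(q,t)=\widetilde\gamma_t(q)$ on $L\times[0,1]$. In the $\partial_t$-direction its differential sends $\partial_t$ to $Z_t$ at $\widetilde\gamma_t(q)$, which lies in $\xi_X=\ker\alpha_X$ and projects to $\dot\gamma(t)\neq0$; in the $L$-directions it sends $u\in T_qL$ to $(\widetilde\gamma_t)_*u$, which is vertical and, since $u\in\xi_q$ and $\widetilde\gamma_t$ is a contactomorphism by part a, lies in $\xi\subseteq\ker\alpha_X$. Hence $\mathfrak{t}^*\alpha_X=0$, so $\im(\mathfrak{t})$ is tangent to $\xi_X$, i.e. isotropic; moreover, because $\gamma$ is immersed the vertical subspace $(\widetilde\gamma_t)_*(T_qL)$ together with the transverse vector $Z_t$ force $d\mathfrak{t}$ to be injective, so $\mathfrak{t}$ is an immersion. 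When $\gamma$ is embedded the fibres $\pi^{-1}(\gamma(t))$ are pairwise disjoint and $\mathfrak{t}$ factors through the embedded submanifold $\pi^{-1}(\gamma([0,1]))\subseteq X$, giving an embedding.

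The only genuinely delicate point is the fibrewise computation in part a: since $Z_t$ is \emph{not} tangent to the fibres, one cannot apply the Reeb identity to it directly, and it is precisely the combination of (ii) with the splitting of a vertical vector into its $\xi$-component and its Reeb component that forces $\mathcal{L}_{Z_t}\alpha_X$ to be pointwise proportional to $\alpha_X$ along each fibre. Everything else — the existence of parallel transport, the Gr\"onwall-type ODE argument, and the immersion/embedding bookkeeping in part b — is routine.
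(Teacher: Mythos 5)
The paper does not prove Proposition \ref{pro:contcon}; it is quoted from Lerman \cite{Le} and Presas \cite{Pr}, so there is no in-text argument to compare against. Your proof is correct and is essentially the standard argument one finds in those references: realize parallel transport as the flow of the horizontal lift $Z$ on the pullback bundle $\gamma^*X$, use $\iota_Z\alpha_X=0$ together with Cartan's formula to reduce $\mathcal{L}_Z\alpha_X$ to $\iota_Z d\alpha_X$, and use the defining $d\alpha_X$-orthogonality of the connection to show that $\iota_Z d\alpha_X$ restricted to a fibre is a multiple of the fibre contact form, whence the conformal ODE. Your decomposition $v=v_0+\alpha_X(v)R_t$ with $v_0\in\xi$ is exactly the right splitting, and you are right that this is the one delicate point — one must resist the temptation to apply a Reeb-type identity directly to $Z$, which is not vertical. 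Part b is also fine: the pullback $\mathfrak{t}^*\alpha_X$ vanishes in both the $L$-directions (by part a, $(\widetilde\gamma_t)_*$ preserves the fibrewise $\xi$) and the $t$-direction (since $Z_t\in\ker\alpha_X$), and the injectivity of $d\mathfrak{t}$ follows by projecting to $B$ to kill the $\partial_t$-component and then using that $\widetilde\gamma_t$ is a diffeomorphism. The only slightly thin spot is the very last sentence: that an injective immersion of $L\times[0,1]$ is an embedding deserves a word (compactness of $L$, or properness if $L$ is non-compact), but this is routine and does not affect correctness.
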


Note that the closedness condition for the fibers is technical and only used to ensure that the vector fields implicitly appearing in the statement are complete.

There are instances in which the contactomorphisms generated via parallel transport have a simple description. The following example will be used in the proof of our results.

Let $\big(M,\xi=\ker\alpha\big)$ be a contact manifold. A time--dependent function $H_{\theta}$ on $M$ induces a path of contactomorphisms $\{\phi_{\theta}\}$, which is defined to be the flow of the time--dependent vector field $X_{\theta}$ satisfying
\begin{eqnarray}\label{eq:contHam}
\iota_{X_{\theta}} \alpha & = & H_{\theta}, \\
\iota_{X_{\theta}} d\alpha &= & -dH_{\theta} + dH_{\theta}(R_{\alpha})\,\alpha \nonumber
\end{eqnarray}
where $R_{\alpha}$ is the Reeb vector field associated to $\alpha$. The function $H_{\theta}$ is called the \textit{contact Hamiltonian} with respect to the contact form $\alpha$ of the contact isotopy $\{\phi_{\theta}\}$. In contrast to the symplectic case, any contact isotopy can be written as the flow of a contact Hamiltonian, see \cite[Section 2.3]{Ge}.

Consider the manifold $M \times \D^2$, where $\D^2$ denotes the 2--disc with polar coordinates $(r,\theta)$. Let $H:M\times\D^2 \longrightarrow\R$ be a function such that $H\in O(r^2)$ at the origin and $\partial_r H>0$. Then the $1$--form
$$\alpha_H=\alpha + H(p,r,\theta)d\theta$$
defines a contact structure $\xi_H$ on the manifold $M\times\D^2$. In particular, suppose that $H:M\times\S^1\longrightarrow\R$ is a positive function. Then $\alpha_H = \alpha + H(p,\theta)\cdot r^2d\theta$ is a contact form in $M\times\D^2$. 

\begin{lemma}\label{lem:monHam}
Let $\big(M,\xi=\ker\alpha\big)$ be a contact manifold, and $H_\theta:M\longrightarrow\R$ an $\S^1$--family of positive smooth functions. Consider the contact fibration
$$\pi:\big(M\times\D^2,\ker\alpha_H\big)\longrightarrow \D^2.$$
Then parallel transport along $\gamma(\theta)=(1,-\theta)$ is the contact flow of the Hamiltonian $H_\theta$.
\end{lemma}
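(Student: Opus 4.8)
The plan is to compute the contact connection of the fibration $\pi:(M\times\D^2,\ker\alpha_H)\to\D^2$ explicitly along the boundary circle $r=1$ and to identify the resulting horizontal lift with the contact vector field $X_\theta$ of $H_\theta$ determined by \eqref{eq:contHam}. First I would fix the path $\gamma(\theta)=(1,-\theta)$ in $\D^2$, so that $\dot\gamma(\theta)=-\partial_\theta$, and seek the horizontal lift $\widetilde{v}=-\partial_\theta+Y_\theta$, where $Y_\theta$ is a vector field tangent to the $M$--factor; by the definition recalled just after the Definition of the contact connection, $\widetilde v$ is characterized by the two conditions $\widetilde v\in\ker\alpha_H$ and $\iota_{\widetilde v}\,d\alpha_H=0$ on the fiberwise contact distribution $\xi=\ker\alpha\cap\ker d\theta$. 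Writing $\alpha_H=\alpha+H(p,\theta)r^2\,d\theta$ and restricting to $r=1$, the first condition gives $\alpha(Y_\theta)-H_\theta(p)=0$, i.e. $\iota_{Y_\theta}\alpha=H_\theta$, which is exactly the first line of \eqref{eq:contHam}.

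Next I would impose the second condition. We have $d\alpha_H=d\alpha+d(Hr^2)\wedge d\theta$, and along $r=1$ the relevant term is $d\alpha+r^2\,dH\wedge d\theta+2Hr\,dr\wedge d\theta=d\alpha+dH\wedge d\theta+2H\,dr\wedge d\theta$. Contracting with $\widetilde v=-\partial_\theta+Y_\theta$ and then restricting the resulting $1$--form to vectors in $\xi$ (on which $d\theta$ and $dr$ vanish), the terms involving $dr\wedge d\theta$ and the $d\theta$--component of $dH\wedge d\theta$ drop out, and one is left with $\iota_{Y_\theta}d\alpha + dH_\theta(\partial_\theta)\cdot 0 - dH_\theta = 0$ on $\xi$; more carefully, $\iota_{\widetilde v}(dH\wedge d\theta)=dH(\widetilde v)\,d\theta-d\theta(\widetilde v)\,dH = dH(\widetilde v)\,d\theta+dH$, so restricted to $\xi$ this contributes $+dH_\theta$, and $\iota_{\widetilde v}d\alpha=\iota_{Y_\theta}d\alpha$ on $\xi$ since $\partial_\theta$ pairs trivially with $d\alpha$. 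Thus the second condition reads $\iota_{Y_\theta}d\alpha=-dH_\theta$ on $\xi$. Combined with $\iota_{Y_\theta}\alpha=H_\theta$, and using that a vector field on $M$ is uniquely determined by its $\alpha$--value together with the restriction of $\iota\cdot d\alpha$ to $\xi$ (the $R_\alpha$--direction being pinned down by $d\alpha(R_\alpha,\cdot)=0$ and the normalization $\iota_{X_\theta}d\alpha(R_\alpha)$ being forced by differentiating $\iota_{X_\theta}\alpha=H_\theta$), this is precisely the pair of equations \eqref{eq:contHam} defining $X_\theta$. Hence $Y_\theta=X_\theta$ and the horizontal lift is $\widetilde v=-\partial_\theta+X_\theta$.

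Finally, I would integrate: the parallel transport $\widetilde\gamma_\theta$ along $\gamma$ is by definition the flow of the time--dependent horizontal lift, and since the $\D^2$--component of $\widetilde v$ is the constant $-\partial_\theta$ — which integrates to the path $\gamma$ itself and keeps us on $r=1$ — the fiber component of the flow is exactly the flow of $X_\theta$, namely $\{\phi_\theta\}$. This identifies parallel transport along $\gamma$ with the contact flow of $H_\theta$, as claimed. The only mild subtlety, which I would address by a one--line remark, is the bookkeeping of the Reeb (i.e. $\partial_\theta$--direction) terms when restricting $\iota_{\widetilde v}d\alpha_H$ to $\xi$: one must check that the extra piece $dH_\theta(\partial_\theta)$ appearing in $\iota_{\widetilde v}(dH\wedge d\theta)$ does not affect the restriction to $\xi$ because it multiplies $d\theta$, and that the normalization term $dH_\theta(R_\alpha)\alpha$ in \eqref{eq:contHam} is automatically consistent with $\iota_{Y_\theta}\alpha=H_\theta$ upon contracting the second identity with $R_\alpha$. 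This is the step where sign and normalization errors are most likely to creep in, so it deserves to be written out with care, but no genuine difficulty arises.
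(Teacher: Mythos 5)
Your proposal is correct and follows essentially the same route as the paper: you write the horizontal lift of $\dot\gamma$ in the form $-\partial_\theta + Y_\theta$, impose the two defining conditions of the contact connection ($\alpha_H$--annihilation and $\iota_\cdot d\alpha_H = 0$ on $\xi$), and identify the resulting pair of equations with the contact Hamiltonian equations \eqref{eq:contHam}, concluding $Y_\theta = X_\theta$ by uniqueness. The paper's proof simply states the candidate lift $\partial_\theta - X_\theta$ and asserts the two conditions hold; your version derives it, which is a little more verbose but the same computation.

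One small stylistic wrinkle: in your line ``$\iota_{Y_\theta}d\alpha + dH_\theta(\partial_\theta)\cdot 0 - dH_\theta = 0$'' the sign of the $dH_\theta$--term is wrong, and you only repair it in the subsequent ``more carefully'' sentence (which correctly yields the contribution $+dH_\theta$ and hence $\iota_{Y_\theta}d\alpha = -dH_\theta$ on $\xi$). In a final write-up you would want to suppress the first, erroneous display and keep only the corrected computation. Also note, as a matter of bookkeeping shared by the paper itself, that along $\gamma(\theta)=(1,-\theta)$ the angular coordinate at parameter $\theta$ is $-\theta$, so the lift at time $\theta$ is really governed by $H_{-\theta}$; this reparametrization is harmless for the application (Proposition~\ref{prop: construction of PS} only needs that the time-$1$ parallel transport is the identity whenever $\{\phi_\theta\}$ is a loop), but it is worth being aware of.
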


\begin{proof}
The horizontal lift with respect to the contact connection of the vector field $\partial_\theta$ at a point $(1,\theta)$ is of the form
$$\widetilde{X}=\partial_\theta-X_\theta,$$
where $X_\theta$ satisfies the equations $\iota_{X_{\theta}} \alpha = H_{\theta}$ and $\iota_{X_{\theta}} d\alpha = -dH_{\theta} + dH_{\theta}(R_{\alpha})\,\alpha$. Indeed, the lift is unique and $\widetilde{X}$ satisfies both $\alpha_H(\widetilde{X})=0$ and $\iota_{\widetilde{X}}d\alpha_H = 0$ on $\xi$. The statement then follows from equations (\ref{eq:contHam}).
\end{proof}

Let us explain how to use Lemma \ref{lem:monHam} to construct a PS--structure in $\big(M \times \D^2(\delta),\ker(\alpha+r^2d\theta)\big)$, where $\D^2(\delta)$ denotes the 2--disc of radius $\delta$, assuming that there is a sufficiently small positive loop of contactomorphisms in $M$.

\begin{proposition}\label{prop: construction of PS}
Assume that $\{\phi_t\}$ is a positive loop of contactomorphisms of an overtwisted contact manifold $\big(M,\xi=\ker\alpha\big)$ which is generated by a contact Hamiltonian $H_{\theta}$, $\theta \in S^1$, with $H_{\theta} < \delta^2$ for some $\delta \in \mathbb{R}^+$. Then there is a PS--structure on $\big(M \times \D^2(\delta),\ker(\alpha+r^2d\theta)\big)$.
\end{proposition}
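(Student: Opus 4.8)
The plan is to build the plastikstufe in $M\times\D^2(\delta)$ fiberwise, using the overtwisted disc of $M$ together with the monodromy description of parallel transport from Lemma~\ref{lem:monHam}. Start from the overtwisted disc $\Delta\subset M$: its characteristic foliation $T\Delta\cap\xi$ has a single interior singularity and $\partial\Delta$ as its unique closed leaf. The idea is that $\Delta$, swept around in the $\theta$--direction by the contact connection of $\big(M\times\D^2,\ker(\alpha+r^2d\theta)\big)$, should trace out a solid torus $\D^2\times\S^1$ whose induced foliation is exactly of plastikstufe type, provided the sweeping isotopy closes up — which is where the positive loop $\{\phi_\theta\}$ enters.

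**First I would** set up the right radial coordinate so that the loop closes up. By Lemma~\ref{lem:monHam}, parallel transport of $\big(M\times\D^2,\ker\alpha_H\big)$ along the circle $\gamma(\theta)=(1,-\theta)$ is the contact flow of $H_\theta$, i.e. the loop $\{\phi_\theta\}$; since $\{\phi_\theta\}$ is a loop, this monodromy is the identity. The hypothesis $H_\theta<\delta^2$ is precisely what is needed to rescale: writing $\alpha_H=\alpha+H(p,\theta)r^2d\theta$ on $M\times\D^2$, one checks that on the smaller disc $\D^2(\delta)$ the contact form $\alpha+H(p,\theta)r^2d\theta$ can be compared to (indeed is contactomorphic to, after an appropriate fiber-preserving diffeomorphism) $\alpha+r^2d\theta$, because $H<\delta^2$ guarantees the region $\{r^2<H\}$, where parallel transport over the full circle is realized, sits inside $\{r<\delta\}$. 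So after this identification the monodromy of parallel transport around the boundary circle of $\D^2(\delta)$ (or of a concentric circle of the right radius) is the loop $\{\phi_\theta\}$, hence the identity. Then I would take the immersed torus obtained by transporting $\partial\Delta$ around such a circle via part b.\ of Proposition~\ref{pro:contcon}: since $\partial\Delta$ is a closed leaf of the characteristic foliation it is isotropic (Legendrian in the $3$-fold $M$), so Proposition~\ref{pro:contcon}.b makes its trace an isotropic — hence, by dimension count, Legendrian — torus in $M\times\D^2(\delta)$, and the monodromy being trivial means this trace closes up to an embedded torus. This torus will be the boundary $\partial\D^2\times\S^1$ of the plastikstufe.

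**Next I would** fill in the solid torus and verify the two axioms of the plastikstufe. The overtwisted disc $\Delta$ itself provides the fiberwise filling: transporting all of $\Delta$ (not just its boundary) around the circle gives a solid torus $\iota:\D^2\times\S^1\to M\times\D^2(\delta)$ whose $\S^1$-slices are copies of $\Delta$ carried by the contactomorphisms $\widetilde\gamma_t$. Because each $\widetilde\gamma_t$ is a contactomorphism, the pulled-back characteristic foliation $\ker(\iota^*\alpha_H)$ on each slice is diffeomorphic to the characteristic foliation of $\Delta$: one interior singular point, and $\partial\Delta$ the unique closed leaf. Axiom a.\ (the boundary torus is the unique closed leaf of $\ker(\iota^*\alpha_H)$) follows because across slices the only closed leaves available are the fiberwise closed leaves — the transported copies of $\partial\Delta$ — and these have been glued into the single boundary torus by the trivial monodromy; the transported singular points sweep out the core circle $\{0\}\times\S^1$, which is transverse to $\xi_X$ as required. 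Axiom b.\ (the interior is foliated by an $\S^1$-family of strips emanating from the core circle and spiralling out to the boundary) is the characteristic-foliation picture of $\Delta$ — the leaves running from the singular point out to $\partial\Delta$ — propagated in the $\theta$-direction; I would describe the strip foliation as the trace of the interior leaves of $\Delta$'s characteristic foliation under parallel transport.

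**The main obstacle** I anticipate is the gluing/periodicity bookkeeping: the monodromy around the chosen circle is the identity as a diffeomorphism, but one must be careful that the transported overtwisted disc and its characteristic foliation also match up after one full turn, not just set-theoretically but so that the resulting solid torus is genuinely embedded with the foliation having \emph{exactly} the closed leaves prescribed by axiom a — no spurious closed leaves appearing from the interaction of the fiberwise foliation with the $\theta$-direction. Relatedly, one must check the radial rescaling carefully: the contact form $\alpha+H r^2 d\theta$ versus $\alpha+r^2d\theta$ on $\D^2(\delta)$, and that the circle along which monodromy equals $\{\phi_\theta\}$ actually lies in $\D^2(\delta)$ — this is exactly the role of the quantitative hypothesis $H_\theta<\delta^2$, and getting the inequality to land correctly (strict vs.\ non-strict, and the factor $r^2$) is the one genuinely delicate point. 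A secondary technical point is invoking Proposition~\ref{pro:contcon} with $\D^2(\delta)$ as base: the fibers $M$ are closed by hypothesis, so completeness of the relevant vector fields is fine, but the path $\gamma$ here is a loop rather than an embedded arc, so the embeddedness conclusion in \ref{pro:contcon}.b must be supplemented by the separate closing-up argument above rather than quoted directly.
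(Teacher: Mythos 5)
Your overall plan matches the paper's: build the plastikstufe by sweeping the overtwisted disc around a loop via the contact connection, using Lemma~\ref{lem:monHam} to identify the monodromy with $\{\phi_\theta\}$ and Proposition~\ref{pro:contcon}.b to see the trace is of plastikstufe type, with $H_\theta<\delta^2$ controlling the size. The verification of axioms a.\ and b., which you spell out in more detail than the paper does, is fine.

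However, the rescaling step is where your argument goes wrong. You claim that after a suitable identification the monodromy of $\big(M\times\D^2(\delta),\ker(\alpha+r^2d\theta)\big)$ around some circle of constant radius equals $\{\phi_\theta\}$. This cannot happen: for the form $\alpha+r^2d\theta$, the horizontal lift of any curve in the base $\D^2(\delta)$ is $-r(s)^2\dot\theta(s)\,R_\alpha$ plus the base vector, so parallel transport around \emph{any} loop in $\D^2(\delta)$ is a time--reparametrized Reeb flow, not the prescribed loop $\{\phi_\theta\}$. In particular there is no concentric circle "of the right radius" doing the job. The correct move (and what the paper does) is to keep the two fibrations separate: first carry out the parallel--transport construction entirely in the auxiliary fibration $\big(M\times\D^2(1),\ker(\alpha+H_\theta r^2d\theta)\big)$, where Lemma~\ref{lem:monHam} gives the monodromy $\{\phi_\theta\}$ around $\{r=1\}$, producing a PS--structure at that radius; and then apply the explicit strict contact embedding
$$(p,r,\theta)\longmapsto\bigl(p,\sqrt{H_\theta(p)}\,r,\theta\bigr),\qquad \big(M\times\D^2(1),\ker(\alpha+H_\theta r^2d\theta)\big)\longrightarrow\big(M\times\D^2(\delta),\ker(\alpha+r^2d\theta)\big),$$
which exists precisely because $H_\theta<\delta^2$. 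This map is not a bundle map over $\D^2$ (the new radius depends on $p$), so the image of the plastikstufe lies on the graph $\{r=\sqrt{H_\theta(p)}\}$ rather than over a circle in the base -- that is why the naive ``find the right circle in $\D^2(\delta)$'' approach fails and why the construction has to be done upstairs first and then transplanted. If you replace your contactomorphism--of--$\D^2(\delta)$ heuristic with this explicit embedding, the rest of your argument (sweeping the disc and its characteristic foliation, transporting $\partial\Delta$ to the boundary Legendrian torus, the singular set sweeping the transverse core) goes through exactly as in the paper.
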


\begin{proof}
Note first the following general fact. Suppose that that $\pi:(X,\xi_X) \longrightarrow \Sigma$ is a contact fibration over a smooth compact surface $\Sigma$, such that the fibers are closed overtwisted contact manifolds. Suppose also that there exists an embedded loop $\gamma: \S^1 \longrightarrow \Sigma$ whose time--1 parallel transport $\widetilde{\gamma}_1$ is the identity. Then there exists a PS--structure in the pre--image $\pi^{-1}(\gamma(\S^1))$. Indeed, since the fiber $\pi^{-1}(\gamma(0))$ is overtwisted we can consider an embedded overtwisted disk $\Delta$ in it and define the map
\begin{eqnarray*}
\rho:  \Delta \times \S^1 & \longrightarrow & X \\
 (p, \theta) &\longmapsto & \rho(r,\theta)=\widetilde \gamma_\theta (p).
\end{eqnarray*}
Then property b. in Proposition \ref{pro:contcon} implies that $im(\rho)$ is a PS--structure. By combining this fact with Lemma \ref{lem:monHam} we see that if $\{\phi_{\theta}\}$ is a positive loop of contactomorphisms on a contact manifold $\big(M,\xi = \ker\alpha\big)$ then there is a PS--structure on $\big(M\times \mathbb{D}^2(1),\text{ker}(\alpha+H_{\theta}r^2d\theta)\big)$ where $H_{\theta}$ is the Hamiltonian function of $\{\phi_{\theta}\}$. The PS--structure is at the level $\{r=1\}$. Note that if $H_\theta < \delta^2$ for some $\delta\in\R^+$ then there exists a strict contact embedding
$$\big(M\times \mathbb{D}^2(1),\text{ker}(\alpha+H_{\theta}r^2d\theta)\big) \longrightarrow \big(M\times \mathbb{D}^2(\delta),\text{ker}(\alpha+r^2d\theta)\big)$$ given by the map $(p, r, \theta) \mapsto (p, \sqrt{H_\theta(p)}r, \theta)$. A PS--structure in 
$\big(M\times \mathbb{D}^2(1),\text{ker}(\alpha+H_{\theta}r^2d\theta)\big)$ at the level $\{r=1\}$ is sent to a PS--structure in $\big(M\times \mathbb{D}^2(\delta),\text{ker}(\alpha+r^2d\theta)\big)$ at the level $\{r = \sqrt{H_{\theta}}\}$. We have thus obtained the required PS--structure in $\big(M \times \D^2(\delta),\ker(\alpha+r^2d\theta)\big)$.
\end{proof}

\section{Proof of Theorem \ref{thm:otnotsmall}}\label{sec: proof}

In this section we prove Theorem \ref{thm:otnotsmall} in the case when $c_1(\xi) = 0$. As we will explain, the general case also follows from the same argument modulo Proposition \ref{prop:symptrivnormal} that will be proved in the last section. 

Let $(M,\xi)$ be a 3--dimensional overtwisted contact manifold and assume that $\{\phi_{\theta}\}$ is a positive loop of contactomorphisms, generated by a contact Hamiltonian $H_{\theta}$, $\theta \in S^1$. We want to show that if $H_{\theta}$ is small in the $\mathcal{C}^0$--norm then the existence of $\{\phi_{\theta}\}$ gives a contradiction with Theorem \ref{thm:psnotfill}.

Recall from the previous section that if $\{\phi_t\}$ is a positive loop of contactomorphisms of $M$ which is generated by a sufficiently small contact Hamiltonian $H_{\theta}$ ($\theta \in S^1$) then there is a PS--structure on $\big(M \times \D^2(\delta),\ker(\alpha+r^2d\theta)\big)$ for some small $\delta \in \mathbb{R}^+$. Note that the manifold $\big(M\times \mathbb{D}^2(\delta),\text{ker}(\alpha+r^2d\theta)\big)$ is the standard contact neighborhood of a codimension--2 contact submanifold with trivial symplectic normal bundle, see \cite[Section 2.5.3]{Ge}. The result of the previous section implies thus the following proposition.

\begin{proposition}\label{prop:pslocal}
Let $(X,\xi_X)$ be a contact 5--manifold and $(M,\xi)$ a codimension--2 overtwisted contact 3--manifold with trivial symplectic normal bundle. Suppose that $\{\phi_\theta\}$ is a positive loop of contactomorphisms which is generated by a sufficiently small contact Hamiltonian. Then there exists a PS--structure in a neighborhood of $M$ in $X$.
\end{proposition}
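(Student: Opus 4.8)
The proof plan for Proposition \ref{prop:pslocal} is essentially to reduce the statement to the local model already constructed in Proposition \ref{prop: construction of PS}. The key point is that near a codimension--$2$ contact submanifold with trivial symplectic normal bundle, the ambient contact structure has a standard form. Concretely, by the contact neighborhood theorem (see \cite[Section 2.5.3]{Ge}), since the symplectic normal bundle of $M$ in $X$ is trivial, there is a neighborhood $\mathcal{U}$ of $M$ in $X$ and a contactomorphism
$$\Phi:\big(M\times\D^2(\varepsilon),\ker(\alpha+r^2d\theta)\big)\longrightarrow(\mathcal{U},\xi_X|_{\mathcal{U}})$$
for some $\varepsilon\in\R^+$, identifying $M\times\{0\}$ with $M$. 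Here $\alpha$ is the contact form on $M$ induced by the splitting, and we are free to rescale $r$ so the standard model appears with the $r^2d\theta$ term.

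First I would invoke this identification to replace the problem ``insert a PS--structure in $X$ near $M$'' by the problem ``insert a PS--structure in $\big(M\times\D^2(\varepsilon),\ker(\alpha+r^2d\theta)\big)$''. Next, given the positive loop $\{\phi_\theta\}$ generated by the contact Hamiltonian $H_\theta$ with respect to $\alpha$, I would observe that ``sufficiently small'' should be taken to mean precisely $H_\theta<\varepsilon^2$ (this is the quantitative content of the hypothesis, and it is exactly the smallness needed so that the output disc of radius $\sqrt{H_\theta}$ fits inside $\D^2(\varepsilon)$). Then Proposition \ref{prop: construction of PS}, applied with $\delta=\varepsilon$, produces a PS--structure in $\big(M\times\D^2(\varepsilon),\ker(\alpha+r^2d\theta)\big)$, sitting at the level $\{r=\sqrt{H_\theta}\}$. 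Pushing this forward by $\Phi$ gives a PS--structure contained in $\mathcal{U}$, i.e. in a neighborhood of $M$ in $X$, which is what we want.

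The only genuine content beyond bookkeeping is the normalization of the contact form: the contact neighborhood theorem gives a model of the form $\ker(\alpha+r^2d\theta)$ only after choosing the identification carefully (one may first get $\ker(\alpha+\lambda_0)$ for the standard Liouville form $\lambda_0=\tfrac12 r^2d\theta$ on the disc, possibly after a scaling of the disc coordinate, and one must check that the contact form on $M$ appearing in the model is a multiple of the given $\alpha$ so that the Hamiltonian $H_\theta$ is the one relevant to $\{\phi_\theta\}$). This is standard and discussed in \cite{Ge}, so I would state it as an application of the contact neighborhood theorem and not belabor it. Thus the proof is short: \emph{apply the contact neighborhood theorem to put a neighborhood of $M$ in standard form, then apply Proposition \ref{prop: construction of PS}}. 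The main (mild) obstacle is making precise what ``sufficiently small'' means in terms of the size $\varepsilon$ of the standard neighborhood, and ensuring the contact Hamiltonian is measured with respect to the correct contact form; neither presents any real difficulty.

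\begin{proof}
By the contact neighborhood theorem for codimension--$2$ contact submanifolds with trivial symplectic normal bundle (see \cite[Section 2.5.3]{Ge}), there exist $\varepsilon\in\R^+$ and a contact embedding
$$\Phi:\big(M\times\D^2(\varepsilon),\ker(\alpha+r^2d\theta)\big)\longrightarrow(X,\xi_X)$$
onto a neighborhood of $M$, restricting to the inclusion on $M\times\{0\}$. Now assume $\{\phi_\theta\}$ is generated by a contact Hamiltonian $H_\theta$ with respect to $\alpha$ satisfying $H_\theta<\varepsilon^2$; this is the meaning of ``sufficiently small'' here. By Proposition \ref{prop: construction of PS} applied with $\delta=\varepsilon$ there is a PS--structure in $\big(M\times\D^2(\varepsilon),\ker(\alpha+r^2d\theta)\big)$. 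Its image under $\Phi$ is a PS--structure contained in a neighborhood of $M$ in $X$.
\end{proof}
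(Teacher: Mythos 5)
Your proposal is correct and follows essentially the same route as the paper: identify a tubular neighborhood of $M$ in $X$ with the standard model $\big(M\times\D^2(\delta),\ker(\alpha+r^2d\theta)\big)$ via the contact neighborhood theorem, interpret ``sufficiently small'' as $\|H_\theta\|_{\mathcal{C}^0}<\delta^2$, and then invoke Proposition \ref{prop: construction of PS}. The normalization remarks you add about matching the contact form $\alpha$ across the identification are a reasonable precaution but do not change the substance.
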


In the case when $c_1(\xi) = 0$, Theorem \ref{thm:otnotsmall} follows from Proposition \ref{prop:pslocal}. Indeed any contact manifold $(M,\xi)$ can be embedded as a contact submanifold into its unit cotangent bundle $\S T^{\ast}M$, for example by the map $e_{\alpha}: M \longrightarrow \S T^{\ast}M$ defined by $e_{\alpha}(p) = \big(p,\alpha(p)\big)$ where $\alpha$ is a contact form for $\xi$. Note that if $c_1(\xi) = 0$ then $e_{\alpha}(M) \subset \S T^{\ast}M$ has trivial symplectic normal bundle. Certainly, the symplectic normal bundle of $e_{\alpha}(M)$ inside $\S T^{\ast}M$ is isomorphic to $\xi$, and thus it is trivial if its Euler class $c_1(\xi)$ vanishes. If there was a small positive contact Hamiltonian $H_{\theta}$ that generates a loop of contactomorphisms then Proposition \ref{prop:pslocal} would give a PS--structure inside $\S T^{\ast}M$. But the existence of a PS--structure inside $\S T^{\ast}M$ is impossible by Theorem \ref{thm:psnotfill} because $\S T^{\ast}M$ is an exact symplectially fillable manifold, a filling being given by $\D T^{\ast}M$. More precisely, a tubular neighborhood of $e_{\alpha}(M)$ inside $\S T^{\ast}M$ is contactomorphic to $\big(M\times \mathbb{D}^2(\delta), \text{ker}(\alpha + r^2d\theta)\big)$ for some $\delta > 0$. If the $\mathcal{C}^0$--norm of $H_{\theta}$ is smaller than $\delta^2$ then we would obtain a PS--structure inside $\S T^{\ast}M$. The square of the maximal size $\delta$ of a tubular neighborhood $M\times \mathbb{D}^2(\delta)$ of $M$ inside $\S T^{\ast}M$ gives in this case the constant $C(\alpha)$ that appears in the statement of Theorem \ref{thm:otnotsmall}.

In the general case, i.e. when $c_1(\xi)$ does not necessarily vanish, the proof of Theorem \ref{thm:otnotsmall} follows from the same argument, combined with the following proposition.

\begin{proposition}\label{prop:symptrivnormal}
Every 3--dimensional overtwisted contact manifold can be embedded as a contact submanifold with trivial symplectic normal bundle into an exact symplectically fillable contact 5--manifold.
\end{proposition}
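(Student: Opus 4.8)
The plan is to realize the desired embedding via the unit cotangent bundle, fixing up the failure of triviality of the symplectic normal bundle with a preliminary stabilization. Recall that for any choice of contact form $\alpha$ the map $e_{\alpha}: M \longrightarrow \S T^{\ast}M$, $p \mapsto (p,\alpha(p))$, is a codimension--2 contact embedding whose symplectic normal bundle is isomorphic to $(\xi, d\alpha)$, and $\S T^{\ast}M$ is exact symplectically fillable by $\D T^{\ast}M$. So the only obstruction to using this embedding directly is the Euler class $c_1(\xi) \in H^2(M;\Z)$. The idea is to first embed $(M,\xi)$ into a larger overtwisted contact $3$--manifold $(M',\xi')$ in such a way that the symplectic normal bundle of the embedding becomes trivial, and so that $(M',\xi')$ is again overtwisted, and then apply the $e_{\alpha'}$ construction to $M'$.

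First I would take connected sums with overtwisted contact spheres. Since $M$ is overtwisted, by Eliashberg's classification (Theorem \ref{thm:classification of ot}) its contact structure is determined by the homotopy class of $\xi$ as a plane field, and connected sum with $(\S^3,\xi_{\mathrm{ot}})$ for a suitable overtwisted $\xi_{\mathrm{ot}}$ changes $c_1$ in a controlled way; more robustly, one may work with the ambient $5$--manifold. Concretely, start with the contact embedding $e_{\alpha}: M \hookrightarrow \S T^{\ast}M =: X_0$, which is exact symplectically fillable, and whose normal bundle is $(\xi, d\alpha)$. To trivialize the normal bundle one performs a contact connected sum of $M$ inside $X_0$ with copies of an overtwisted $(\S^3, \xi')$ sitting in a Darboux ball of $X_0$; each such summing operation modifies the isotopy class of the codimension--2 submanifold and can be used to add to the Euler class of its normal bundle an arbitrary element coming from the sphere, since $c_1$ of the normal bundle is additive under connected sum of contact submanifolds. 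Choosing finitely many summands with the appropriate Euler contributions produces a new overtwisted contact $3$--manifold $M'$, still embedded in $X_0$ (hence in an exact symplectically fillable $5$--manifold), now with trivial symplectic normal bundle. Overtwistedness of $M'$ is automatic because it contains $M$, which is overtwisted.

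The key steps, in order, are: (1) recall the standard codimension--2 contact embedding $e_{\alpha}: M \hookrightarrow \S T^{\ast}M$ with $\S T^{\ast}M$ exact fillable and with $\mathrm{sn}(e_\alpha(M)) \cong (\xi, d\alpha)$; (2) observe that the obstruction to triviality is exactly $c_1(\xi) \in H^2(M;\Z)$, which need not vanish; (3) find a local modification of the contact embedding — a contact connected sum with an overtwisted $\S^3$ inside a Darboux chart of the ambient $5$--fold — that changes the Euler class of the normal bundle by a prescribed amount while keeping the ambient manifold exact fillable and keeping the embedded $3$--manifold overtwisted; (4) iterate finitely many times to kill $c_1$ of the normal bundle, obtaining the desired embedding $M' \hookrightarrow X$.

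The hard part will be step (3): making precise the claim that one can prescribe the change in the Euler class of the symplectic normal bundle by a suitable choice of summand and attaching sphere, and checking that the ambient $5$--manifold remains exact symplectically fillable after the connected--sum modification (one wants to perform the sum inside a Darboux ball, or alternatively to use a surgery presentation compatible with the filling). One must also verify that the resulting $(M',\xi')$ is genuinely an overtwisted contact $3$--manifold with the stated trivial normal bundle — the overtwistedness being inherited from the unchanged part $M$, and the triviality being the whole point of the construction. Once (3) is in place, (4) is routine: a finite iteration suffices because $c_1(\xi)$ is a single integral cohomology class and each connected sum adds a controllable correction, so after finitely many steps the normal bundle is trivial and Proposition \ref{prop:symptrivnormal} follows.
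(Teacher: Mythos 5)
Your step (3) does not work as stated, and the problem is fatal for the overall strategy. The obstruction you need to kill is the Euler class $c_1(\xi) \in H^2(M;\Z)$, which is a global invariant of the bundle over all of $M$. A contact connected sum with an overtwisted $(\S^3,\xi')$ performed inside a Darboux ball is a modification supported near a single point of $M$. Since $H^2(\S^3;\Z)=0$, the sphere summand carries no degree--$2$ cohomology, and the additivity of $c_1$ under connected sum that you invoke just says $c_1$ is \emph{unchanged}: under the Mayer--Vietoris identification $H^2(M\#\S^3;\Z)\cong H^2(M;\Z)\oplus H^2(\S^3;\Z)\cong H^2(M;\Z)$, the new normal bundle has the same $c_1$ as the old one. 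What a connected sum with an overtwisted $\S^3$ \emph{can} change is the $3$--dimensional obstruction $d^3$ (the Hopf invariant), not $d^2 = c_1$. Any modification capable of shifting $c_1(\nu_M)$ by a prescribed class must be supported along a representative of its Poincar\'{e} dual, i.e.\ along a $1$--cycle in $M$, not in a ball.

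This is precisely why the paper's proof uses surgery along Legendrian knots rather than connected sums at a point. One first chooses a Legendrian link $L$ with $pd([L])=c_1(\xi)$ and Lutz--twists along its transverse push--off (plus a correction link supplied by Lemma~\ref{lem:formLutz}) so that the twisted structure $\overline{\xi}$ is still isotopic to $\xi$; one then embeds $(M,\overline{\xi})$ in $\S T^{\ast}M$ and performs the Lutz \emph{untwists} as ambient contact surgeries (Propositions~\ref{prop:trivnorm} and~\ref{prop:trivnormK}). The surgery framing in the $5$--manifold is built from a section of $\nu_M$ whose zero locus is the belt sphere $\lambda_K$, and because $pd([\lambda_K])=c_1(\nu_M)$, after surgery that section extends without zeros — that is the mechanism that actually kills $c_1$. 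The exact fillability of the ambient $5$--manifold survives because Lutz untwist corresponds to attaching a Weinstein handle, i.e.\ an exact symplectic cobordism with the fillable end concave (Corollary~\ref{coro:dgs}). Your outline has no analogue of the first step (arranging $pd([\lambda_K])=c_1$), and its replacement for the second step operates in a ball where it cannot see $c_1$ at all.

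Steps (1) and (2) of your plan — the embedding $e_\alpha: M\hookrightarrow \S T^{\ast}M$, the identification of the symplectic normal bundle with $(\xi,d\alpha)$, and the observation that its triviality over a $3$--manifold is governed exactly by $c_1(\xi)$ — are correct and agree with the paper. It is step (3), the local connected--sum trick, that needs to be replaced by a surgery along a $1$--cycle dual to $c_1(\xi)$; once you accept that, you are essentially forced into the cobordism--and--Lutz--twist argument the paper gives.
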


The proof of this result will be given in the next section. Assuming it, Theorem \ref{thm:otnotsmall} is proved as follows. Given an overtwisted contact 3--manifold $(M,\xi)$, by Proposition \ref{prop:symptrivnormal} it can be embedded with trivial symplectic normal bundle into an exact symplectically fillable contact 5--manifold $(X,\xi_X)$. If there was a sufficiently small contact Hamiltonian $H_{\theta}$ generating a positive loop of contactomorphisms then Proposition \ref{prop:pslocal} would give a PS--structure in $X$, contradicting Theorem \ref{thm:psnotfill}.

\section{Contact embeddings with trivial normal bundle}\label{sec: embedding}

In this section we will prove Proposition \ref{prop:symptrivnormal}, i.e. that every overtwisted contact 3--manifold $(M,\xi)$ can be embedded with trivial symplectic normal bundle into an exact symplectially fillable contact 5--manifold $(X,\xi_X)$. The idea of the proof is to start with a contact embedding of $(M,\xi)$ into its unit cotangent bundle $\S T^{\ast}M$ and then perform contact surgeries in an appropriate way in order to make the symplectic normal bundle trivial while keeping the symplectic fillability of the resulting 5--manifold. As we will see the process will also modify the contact structure on the initial overtwisted 3--manifold $M$. One of the crucial points of the proof will be to make sure that the modified contact structure on the 3--manifold will still be overtwisted and moreover in the same homotopy class as cooriented 2--plane fields as the initial one. Then it will be isotopic to it according to Theorem \ref{thm:classification of ot}.

We start by briefly recalling the notion of Lutz twist, its effect on the homotopy class of the contact structure and its relation to contact surgery and symplectic cobordism. See \cite{Ge} for more details on these notions.

Let $K$ be a positive transverse knot in $(M,\xi)$. A \textit{Lutz twist} along $K$ is an operation that deforms in a certain way (see \cite[Section 4.3]{Ge}) the contact structure in a neighborhood of $K$. The resulting contact structure $\xi^K$ on $M$ is always overtwisted. The effect of a Lutz twist on the homotopy class of the contact structure can be described as follows. Given two 2--plane fields $\xi_0$ and $\xi_1$ there are two cohomology classes $d^2(\xi_0,\xi_1)\in H^2(M,\Z)$ and $d^3(\xi_0,\xi_1)\in H^3(M,\Z)$ that measure the obstruction for $\xi_0$ and $\xi_1$ to belong to the same homotopy class of plane fields. We refer to \cite{Ge} for details and for a proof of the following results.

\begin{proposition}\label{prop:d2}
Let $K\subset M$ be a positive transverse knot on $\xi$. Then $d^2(\xi,\xi^K) = -pd([K])$.
\end{proposition}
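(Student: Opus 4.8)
The plan is to reduce the computation of $d^2(\xi,\xi^K)$ to a purely local and homological calculation, exploiting the fact that a Lutz twist is supported in a tubular neighborhood of $K$. First I would recall that the obstruction class $d^2(\xi_0,\xi_1) \in H^2(M;\Z)$ is the primary difference of the two sections $\xi_0,\xi_1$ of the bundle of cooriented $2$--plane fields over $M$, which is the $S^2$--bundle associated to $TM$; since $d^2$ is an obstruction on the $2$--skeleton, it only sees the behaviour of $\xi$ and $\xi^K$ over a $2$--complex, and the two plane fields agree outside a solid torus neighborhood $\nu(K)$ of $K$. Thus $d^2(\xi,\xi^K)$ is supported near $K$ and I would express it via the relative obstruction in $H^2(\nu(K),\partial\nu(K);\Z)$, then push forward to $H^2(M;\Z)$.

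Next I would carry out the local model computation. Inside the standard neighborhood of the positive transverse knot $K$, coordinates $(z,(r,\varphi)) \in S^1 \times \D^2$ with contact form $dz + r^2 d\varphi$ (or the normalization used in \cite[Section 4.3]{Ge}), the Lutz twist replaces the plane field by one that makes a full extra rotation as $r$ goes from $0$ to the boundary radius; concretely on $\partial\nu(K) = T^2$ the new plane field, viewed as a section of the $S^2$--bundle restricted to the torus, differs from the old one by a map that wraps once around the $S^2$ in the direction dual to the meridian. Comparing the two sections on $\nu(K)$ rel boundary gives the generator of $H^2(\nu(K),\partial\nu(K);\Z) \cong \Z$ (Poincar\'e--Lefschetz dual to the core $[K]$), and tracking signs shows the relative class is $\pm[K]^{\vee}$. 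The sign is fixed by the convention that $K$ is a \emph{positive} transverse knot and that a Lutz twist is a \emph{negative} surgery-type modification, which is exactly what produces the minus sign and the Poincar\'e dual $pd([K])$ in the statement.

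The main obstacle — really the only subtle point — is bookkeeping the orientations and the precise normalization of $d^2$ so that the answer comes out as $-pd([K])$ rather than $+pd([K])$ or $2\,pd([K])$: one must be careful that a Lutz twist as defined in \cite{Ge} inserts exactly one extra half-turn contributing a single generator (not two), and that the coorientation convention on $\xi$ is consistent with the orientation convention on the Poincar\'e dual and with the positivity of the transverse knot. I would pin this down by evaluating $\langle d^2(\xi,\xi^K),[\Sigma]\rangle$ on a Seifert surface $\Sigma$ for a test knot linking $K$ once, reducing the pairing to an intersection number of $\Sigma$ with $K$, and reading off both magnitude and sign from the explicit local model; this is the step where I expect to spend most of the effort, and I would simply cite \cite{Ge} for the underlying formula for $d^2$ and for the local form of the Lutz twist rather than re-deriving them.
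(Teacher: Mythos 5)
The paper does not actually supply its own proof of Proposition~\ref{prop:d2}: it states the result and immediately refers to \cite{Ge} for a proof, so there is nothing in the source to compare your argument against line by line. That said, your sketch is the standard obstruction-theoretic argument that one finds in \cite[Lemma~4.3.4 and its surroundings]{Ge}: localize the primary obstruction $d^2$ to the solid torus where $\xi$ and $\xi^K$ differ, identify the relative class in $H^2(\nu(K),\partial\nu(K);\Z)\cong\Z$ with a generator dual to the core, and push forward. So the approach is the right one and is the same as the cited reference.

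Two points in your write-up are imprecise and would need repair before this counts as a proof. First, the phrase ``makes a full extra rotation'' is wrong for the simple Lutz twist that \cite[Section~4.3]{Ge} and this paper use: the curve $(h_1(r),h_2(r))$ starts at $(-1,0)$ instead of $(1,0)$ and returns to agree with the standard form, so the \emph{extra} angle swept out is $\pi$, not $2\pi$. A genuine full ($2\pi$) Lutz twist leaves the homotopy class of plane fields unchanged and would give $d^2=0$, contradicting the proposition; getting the single generator (rather than $0$ or $2$) depends on this. Second, ``evaluating $\langle d^2(\xi,\xi^K),[\Sigma]\rangle$ on a Seifert surface $\Sigma$ for a test knot linking $K$ once'' is not well posed as stated: such a $\Sigma$ has nonempty boundary, so $[\Sigma]\notin H_2(M;\Z)$ and the pairing with $d^2\in H^2(M;\Z)$ is undefined. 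The correct version of this check is either to pair $d^2$ with a \emph{closed} surface and read off the algebraic intersection number with $K$ (which is what Poincar\'e duality to $[K]$ means), or to stay in relative (co)homology of $(\nu(K),\partial\nu(K))$ and pair the relative obstruction class with a meridian disk. With those two fixes, the bookkeeping you describe does produce $-pd([K])$, matching the statement and \cite{Ge}.
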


\begin{proposition}\label{prop:d3}
Let $K\subset M$ be a null--homologous positive transverse knot on $\xi$ with self--linking number $sl(K)$. Then $d^2(\xi,\xi^K)=0$ and $d^3(\xi,\xi^K)=sl(K)$.
\end{proposition}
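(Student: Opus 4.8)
The plan is to prove Proposition~\ref{prop:d3} as a direct consequence of Proposition~\ref{prop:d2} together with a computation of the three-dimensional obstruction class via the standard obstruction-theoretic description of the homotopy classification of coorientable $2$--plane fields on a closed oriented $3$--manifold.

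First I would recall the general setup. A coorientable $2$--plane field $\xi$ on $M$ is the same datum (up to homotopy) as a section of the bundle of unit vectors in $TM$ with respect to an auxiliary metric, i.e. a map $M\to S^2$ after trivializing $TM$ (which exists since $M$ is parallelizable). Two such maps are compared first over the $2$--skeleton: the primary obstruction to a homotopy is the class $d^2(\xi_0,\xi_1)\in H^2(M;\Z)$, which by Proposition~\ref{prop:d2} equals $-\mathrm{PD}([K])$ for a Lutz twist along $K$. Since $K$ is assumed null--homologous, $[K]=0$ in $H_1(M;\Z)$, hence $d^2(\xi,\xi^K)=0$ and the two plane fields already agree over the $2$--skeleton. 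This is the point where the null--homologous hypothesis is used, and it is what makes the secondary obstruction $d^3(\xi,\xi^K)\in H^3(M;\Z)\cong\Z$ well defined (independent of the choice of homotopy over the $2$--skeleton, because $\pi_2(S^2)$ acting on $\pi_3(S^2)$ is trivial up to the relevant indeterminacy once the $H^2$ class vanishes; more precisely the indeterminacy of $d^3$ is the image of a map $H^1(M;\pi_2 S^2)\to H^3(M;\pi_3 S^2)$ which vanishes here).

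The core of the argument is then to identify $d^3(\xi,\xi^K)$ with $sl(K)$. Here I would use the local nature of the Lutz twist: $\xi$ and $\xi^K$ agree outside a tubular neighborhood $N(K)\cong S^1\times\D^2$ of $K$, so the difference class $d^3(\xi,\xi^K)$ is supported in $N(K)$ and can be computed as a purely local $\pi_3(S^2)\cong\Z$--valued degree. Concretely, restricting to $N(K)$, the two plane fields give two maps $N(K)\to S^2$ agreeing on $\partial N(K)$, hence a map from the double of $N(K)$ — or equivalently from $S^3$ after collapsing, once one accounts for the trivialization — and its Hopf invariant is the local contribution. One then computes this Hopf invariant in the explicit Lutz-twist model of \cite[Section~4.3]{Ge}: the contact form on $S^1\times\D^2$ is given in coordinates by something like $h_1(r)\,d\varphi + h_2(r)\,d\phi$ with a prescribed rotation of $(h_1,h_2)$, and the winding that produces the overtwisting is exactly what changes the framing/degree. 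The bookkeeping shows the change equals the self-linking number $sl(K)$, which by definition measures the difference between the contact framing of $K$ and the Seifert framing coming from the null-homology. This identification — correctly matching the normalization of $d^3$ with the normalization of $sl$, and checking signs and the factor conventions against \cite{Ge} — is the step I expect to be the main obstacle; everything else is formal obstruction theory.

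Finally I would assemble the pieces: $d^2(\xi,\xi^K)=0$ from Proposition~\ref{prop:d2} and $[K]=0$; then $d^3(\xi,\xi^K)=sl(K)$ from the local Hopf-invariant computation in the standard model. Since in the present excerpt the detailed model computation is quoted from \cite{Ge}, in the paper itself this proof amounts to citing \cite{Ge} for both statements, so rather than reproducing the computation I would simply refer the reader there, exactly as the surrounding text does, and note that the hypothesis $[K]=0$ is precisely what is needed to make the right-hand sides of Propositions~\ref{prop:d2} and~\ref{prop:d3} consistent (both $d^2$--terms vanishing) and $d^3$ well defined.
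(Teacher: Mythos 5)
The paper gives no proof of Proposition~\ref{prop:d3} (nor of Proposition~\ref{prop:d2}); it simply refers the reader to Geiges's book \cite{Ge} for both statements. You correctly identify this at the end, and the obstruction-theoretic outline you sketch (vanishing of $d^2$ from $[K]=0$, then a local Hopf-invariant computation in the explicit Lutz-twist model on $S^1\times\D^2$ yielding $d^3 = sl(K)$) is an accurate summary of the argument carried out in \cite{Ge}, so your proposal is consistent with what the paper does.
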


Following Eliashberg \cite{E90} and Weinstein \cite{W}, fix a Legendrian knot $L$ on a contact manifold $3$--manifold $M$ and fix the relative ($-1$)--framing with respect to the canonical contact framing associated to the knot. If we perform on $M$ a handle attachment along $L$, then the resulting cobordism has a natural symplectic structure. The bottom boundary of this cobordism, i.e. the initial contact manifold, is a concave boundary of the symplectic structure. The upper boundary is convex and therefore it has an induced contact structure, which is said to be obtained from the initial one by contact $(-1)$--surgery. The inverse operation is called a contact $(+1)$--surgery.

As proved by Ding, Geiges and Stipsicz \cite{DGS}, the effect of a Lutz twist on a contact manifold can be described in terms of contact surgery as follows. Given a Legendrian knot $L\subset(M,\xi)$, denote by $t(L)$ a positive transverse push--off of $L$ and by $\sigma(L)$ a Legendrian push--off of $L$ with two added zig--zags. Then we have the following result.

\begin{proposition}[\cite{DGS}]\label{prop:dgs}
Let $(M,\xi)$ be a contact $3$--manifold and $L$ a Legendrian knot for $\xi$. The contact structure obtained by a Lutz twist along $t(L)$ is isotopic to the contact structure resulting from a contact $(+1)$--surgery along $L$ and $\sigma(L)$.
\end{proposition}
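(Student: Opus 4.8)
The plan is to localise the operation near $L$ and to reduce the statement to an explicit comparison of two model contact structures on a solid torus, using the cancellation lemma for contact surgeries as a bookkeeping device. First I would invoke the Ding--Geiges cancellation lemma (see \cite{Ge}): contact $(+1)$--surgery along $L$ together with contact $(-1)$--surgery along a Legendrian push--off $L'$ of $L$ (taken with respect to the contact framing) gives back $(M,\xi)$, since the corresponding Weinstein handles form a cancelling pair. Denoting by $(M',\xi')$ the result of contact $(+1)$--surgery along $L$ alone, both $L'$ and $\sigma(L)$ survive as Legendrian knots in $M'$ because they are disjoint from $L$, so Proposition \ref{prop:dgs} becomes equivalent to the assertion that contact $(+1)$--surgery along $\sigma(L)\subset M'$ differs from contact $(-1)$--surgery along $L'\subset M'$ precisely by a full Lutz twist along $t(L)$.

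The second step is the (easy) topological matching. Contact $(-1)$--surgery along $L'$ is smooth surgery with framing $tb(L')-1=tb(L)-1$, whereas contact $(+1)$--surgery along $\sigma(L)$ is smooth surgery with framing $tb(\sigma(L))+1=(tb(L)-2)+1=tb(L)-1$; taking the two added zig--zags of opposite sign keeps $r(\sigma(L))=r(L')$. Hence both surgeries delete isotopic solid tori from $M'$ and reglue by isotopic gluing maps, so the resulting $3$--manifolds agree, in accordance with the fact that a full Lutz twist preserves the underlying manifold. This also explains the precise recipe: one needs exactly two zig--zags to lower $tb$ by $2$ so that the framings match, and of opposite signs to leave the rotation number --- hence the obstruction class $d^2$ of Proposition \ref{prop:d2} --- unchanged.

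The heart of the proof is the contact--geometric comparison, and it is the step I expect to be the main obstacle. After the surgery along $L$, a neighbourhood of $t(L)$ in $M'$ is the standard neighbourhood $\S^1\times\D^2$ of a positive transverse knot, carrying a model contact form $f(r)\,d\varphi+g(r)\,d\theta$, and each of the two further surgeries replaces a thinner such neighbourhood by a solid torus equipped with an explicit model contact structure of the same shape. The claim is that passing from the model for $(-1)$--surgery along $L'$ to the model for $(+1)$--surgery along the stabilised knot $\sigma(L)$ inserts exactly one extra full turn into the winding of the path $(f,g)$ about the origin --- equivalently, it precomposes the regluing with the model solid--torus diffeomorphism that by definition implements a full Lutz twist along the core $t(L)$. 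I would make this precise through convex surface theory: both glued pieces are tight solid tori determined by their dividing sets on the convex boundary torus, the two stabilisations account for the two extra dividing curves distinguishing the relevant models, and a careful reading of the characteristic foliations identifies the discrepancy, after gluing, as a \emph{full} Lutz twist along a transverse curve isotopic to $t(L)$ with its transverse framing --- rather than a half twist, or a twist along a different curve. As an independent consistency check I would compute $d^2$ and $d^3$ of the surgered contact structure relative to $\xi$ from the contact--surgery formulas in \cite{Ge,DGS} and confirm that they reproduce $-pd([L])$ and, in the null--homologous case, $sl(t(L))=tb(L)-r(L)$, in agreement with Propositions \ref{prop:d2} and \ref{prop:d3}; this also pins down the sign conventions for the two zig--zags.
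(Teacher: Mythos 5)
The paper does not prove Proposition~\ref{prop:dgs}; it is stated with a citation to Ding--Geiges--Stipsicz \cite{DGS} and used as a black box, so there is no argument of the authors to compare yours against.

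Taken on its own terms, your reduction is correctly set up: the cancellation lemma is a legitimate bookkeeping device, the framing count is right (contact $(-1)$--surgery along the push--off $L'$ and contact $(+1)$--surgery along the doubly stabilized $\sigma(L)$ both give smooth coefficient $tb(L)-1$), and one positive plus one negative zig--zag is the correct recipe. But the entire content of the proposition sits in the step you yourself flag as the main obstacle and state only as a ``claim'': that, after the $(+1)$--surgery along $L$, trading the $(-1)$--surgery model on $L'$ for the $(+1)$--surgery model on $\sigma(L)$ inserts exactly one extra $2\pi$ of winding in the path $(f,g)$ --- a \emph{full} Lutz twist supported along $t(L)$, and not a half twist or a twist along some other transverse curve. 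You point toward convex surface theory but do not carry out the comparison of dividing sets or characteristic foliations across the gluing, and the closing $d^2,d^3$ computation is, as you acknowledge, a consistency check rather than a proof (it could be upgraded to one only after independently establishing that the surgered contact structure is overtwisted, so that Theorem~\ref{thm:classification of ot} applies --- a nontrivial extra step). Until the model comparison is actually performed, the proposal reproduces the surrounding scaffolding of the argument but leaves the computational core, which in \cite{DGS} is a direct matching of the Lutz model form $f(r)\,d\theta + g(r)\,d\varphi$ against the contact--surgery models on solid tori, untouched.
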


Being the inverse of a ($-1$)--surgery, the contact ($+1$)--surgery in a contact 3--fold corresponds to a symplectic 2--handle attachment to the concave boundary of a bounded part of the symplectization, i.e. we obtain a symplectic cobordism in which the new boundary is concave. Consider the transverse knot $K=t(L)\subset (M,\xi^K)$ and the belt spheres $\lambda_K,\lambda^\sigma_K\subset(M,\xi^K)$ corresponding to the contact $(+1)$--surgeries along $L$ and $\sigma(L)$ in $(M,\xi)$ described in Proposition \ref{prop:dgs}. Then $\lambda_K$ and $\lambda^\sigma_K$ are two Legendrian knots in $(M,\xi^K)$. Since Proposition \ref{prop:dgs} is a local result, both Legendrian knots can be assumed arbitrarily close to $K$. The following observation will be used in our argument.

\begin{lemma}\label{lem:h}
$[K]=[L]=[\lambda_K]$.
\end{lemma}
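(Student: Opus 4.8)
The plan is to identify all three homology classes in $H_1(M;\Z)$ by tracking how the various push--off and surgery operations affect the underlying smooth knot. First I would recall that $K = t(L)$ is, by definition, a positive transverse push--off of the Legendrian knot $L$; since a transverse push--off is obtained by a $\mathcal{C}^0$--small perturbation of $L$ inside an arbitrarily small tubular neighborhood, the knots $K$ and $L$ are smoothly isotopic in $M$, hence $[K] = [L]$ in $H_1(M;\Z)$.

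Next I would analyze $\lambda_K$. By Proposition \ref{prop:dgs}, the contact structure $\xi^K$ is obtained from $(M,\xi)$ by a contact $(+1)$--surgery along $L$ (together with the auxiliary surgery along $\sigma(L)$), and the knot $\lambda_K$ is the belt sphere of the $2$--handle attached along $L$. Since Proposition \ref{prop:dgs} is a purely local statement, the surgery is performed inside a small neighborhood of $L$, so $M$ is recovered as a manifold (the $(+1)$ and $(-1)$ surgeries being inverse operations) and the belt sphere $\lambda_K$ can be taken to lie in an arbitrarily small neighborhood of $L$ — more precisely, it is isotopic in that neighborhood to the attaching knot, hence smoothly isotopic to $L$ in $M$. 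Therefore $[\lambda_K] = [L]$. Combining the two identifications gives $[K] = [L] = [\lambda_K]$ as claimed.

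The step I expect to be the main obstacle is making the assertion ``$\lambda_K$ is smoothly isotopic to $L$ in $M$'' rigorous: one must be careful that the contact $(+1)$--surgery along $L$ is realized by attaching a symplectic/Weinstein $2$--handle whose core disc is capped off by (a copy of) $L$ and whose belt sphere is a parallel copy of $L$ sitting in the boundary of the handle, so that after the handle is glued in the belt sphere is a small push--off of $L$ living in $\partial(\text{nbhd of }L)$; since the surgery does not change the ambient manifold $M$, this push--off is unknotted with respect to $L$ in the sense of being isotopic to it within the surgery region, giving $[\lambda_K] = [L]$. A clean way to present this is to invoke the standard handle picture for contact $(\pm 1)$--surgery from \cite{DGS,Ge}: in the surgery diagram the belt sphere of the $(+1)$--handle along $L$ is a meridional curve that, after blowing down / cancelling against the surgery, becomes isotopic to $L$. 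Everything here is homological and local, so no genuine contact geometry beyond the cited results is needed; the lemma is really a bookkeeping statement about which smooth knot carries each of the Legendrian/transverse representatives.
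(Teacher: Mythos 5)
Your proof follows the same route as the paper's: $[K]=[L]$ because the transverse push--off is a $\mathcal{C}^0$--small perturbation of $L$, and $[\lambda_K]=[L]$ because the surgery realizing the Lutz twist is smoothly trivial (the paper cites Proposition~6.4.5 of \cite{Ge} for this step). One detail in your argument is inaccurate, however: the reason $M$ is recovered as a smooth manifold is \emph{not} that ``$(+1)$ and $(-1)$ surgeries are inverse operations'' --- in Proposition~\ref{prop:dgs} \emph{both} surgeries, along $L$ and along $\sigma(L)$, are contact $(+1)$--surgeries. Rather, $M$ is recovered because the combined surgery along $L\cup\sigma(L)$ realizes a Lutz twist, and a Lutz twist does not change the underlying smooth $3$--manifold. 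Finally, note that the lemma is a statement about homology classes only, so the stronger claim that $\lambda_K$ is smoothly \emph{isotopic} to $L$ (which you correctly flag as the delicate point) is more than what is needed: since the surgery is local and smoothly trivial, $\lambda_K$ lies in a tubular neighborhood of $K$ where $H_1\cong\Z$ is generated by $[K]$, and it suffices to check that $\lambda_K$ represents the generator, which is exactly the homological content of the handle picture you invoke.
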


\begin{proof}
By definition $[K]=[t(L)]=[L]$. The equality $[L]=[\lambda_K]$ follows from the fact that the surgery in Proposition \ref{prop:dgs} is smoothly trivial. This implies the statement. See Proposition 6.4.5 in \cite{Ge} for further details.
\end{proof}

\noindent A consequence of the description in Proposition \ref{prop:dgs} is the existence of an exact symplectic cobordism realizing a Lutz twist. More precisely we have the following result.

\begin{corollary} \label{coro:dgs}
Let $(M,\xi)$ be a contact 3--manifold and $K$ a positive transverse knot. Then there exists an exact symplectic cobordism $(W,\omega)$ from $(M,\xi^K)$ to $(M,\xi)$, which is realized by a 2--handle attachment along the Legendrian link $\lambda_K\cup\lambda^\sigma_K$.
\end{corollary}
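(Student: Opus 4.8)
The plan is to deduce the corollary from Proposition \ref{prop:dgs} together with the standard dictionary between contact $(\pm 1)$--surgery and symplectic handle attachment of Eliashberg \cite{E90} and Weinstein \cite{W}. By Proposition \ref{prop:dgs}, writing $K=t(L)$, the contact manifold $(M,\xi^K)$ is obtained from $(M,\xi)$ by contact $(+1)$--surgery along the Legendrian link $L\cup\sigma(L)$. Since $\sigma(L)$ is a Legendrian push--off of $L$ with two added zig--zags, after a small Legendrian isotopy we may assume $L$ and $\sigma(L)$ are disjoint, so the two surgeries are performed along a genuine Legendrian link and can be handled simultaneously.

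Next I would use that contact $(+1)$--surgery is inverse to contact $(-1)$--surgery: performing contact $(-1)$--surgery on $(M,\xi^K)$ along the belt spheres $\lambda_K$ and $\lambda^\sigma_K$, with respect to the contact framings, returns $(M,\xi)$. By the Eliashberg--Weinstein construction, contact $(-1)$--surgery along a Legendrian knot is realized by attaching a Weinstein $2$--handle to a convex boundary. Concretely, take the symplectization collar $\big([0,1]\times M,\, d(e^t\alpha^K)\big)$ of $(M,\xi^K)$, where $\alpha^K$ is a contact form for $\xi^K$, and attach Weinstein $2$--handles to the convex end $\{1\}\times M$ along $\lambda_K$ and $\lambda^\sigma_K$. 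This produces a Weinstein cobordism $(W,\omega=d\lambda)$ whose concave boundary is $(M,\xi^K)$ and whose convex boundary is $(M,\xi)$; since $\lambda$ restricts to a contact form on each boundary component, $(W,\omega)$ is an exact symplectic cobordism from $(M,\xi^K)$ to $(M,\xi)$ realized by a $2$--handle attachment along the Legendrian link $\lambda_K\cup\lambda^\sigma_K$, as required.

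The only delicate points are bookkeeping ones: identifying the surgery curves in $(M,\xi^K)$ as precisely the belt spheres $\lambda_K,\lambda^\sigma_K$ of the $(+1)$--handles, checking that the relevant framings are the contact $(-1)$--framings so that the Weinstein handles attach correctly, and keeping track of which boundary component is concave and which is convex (so that the cobordism runs in the stated direction). None of these requires a new idea beyond the cited surgery calculus; the substantive content, namely that $(M,\xi^K)$ is exactly symplectically cobordant from below to $(M,\xi)$, is immediate once Proposition \ref{prop:dgs} has reduced the Lutz twist to a $(+1)$--surgery. The main potential obstacle is simply ensuring consistency of all framing and orientation conventions, but no geometric difficulty arises.
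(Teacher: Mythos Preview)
Your proposal is correct and follows essentially the same approach as the paper: the corollary is stated there without a separate proof, as an immediate consequence of Proposition~\ref{prop:dgs} together with the Eliashberg--Weinstein description of contact $(-1)$--surgery as Weinstein $2$--handle attachment, and the paper's preceding paragraph already identifies $\lambda_K,\lambda^\sigma_K\subset(M,\xi^K)$ as the belt spheres of the $(+1)$--surgeries. Your write-up simply makes explicit the symplectization-collar construction and the framing bookkeeping that the paper leaves implicit.
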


The convex end of $(W,\omega)$ is the contact boundary $(M,\xi)$, the concave end is $(M,\xi^K)$. A Lutz untwist is thus tantamount to an exact symplectic cobordism. It is central to note that the convex end of an exact symplectic cobordism is exact symplectically fillable if the concave end is. This fact will be crucial in our proof of Proposition \ref{prop:symptrivnormal}, because it will ensure that the 5--manifold $X$ into which we will embed $(M,\xi)$ will still be fillable. Indeed, as we will see, $X$ will be obtained by constructing an exact symplectic cobordism between contact 5--manifolds with an exact symplectically fillable concave end. This cobordism will restrict to a cobordism between contact 3--manifolds as the one described in Corollary \ref{coro:dgs}.

In our argument we will also use the following result.

\begin{lemma}\label{lem:formLutz}
Let $(M,\xi)$ be an overtwisted contact 3--manifold and $\Delta$ a fixed overtwisted disk. Consider a Legendrian link $L$ in $M$ disjoint from $\Delta$. Then there exists a Legendrian link $\Lambda$ disjoint from $L\cup\Delta$ such that $\xi$ is isotopic to $\xi^{t(L\cup\Lambda)}$.
\end{lemma}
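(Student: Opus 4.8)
The plan is to use the surgery-theoretic description of Lutz twists from Proposition \ref{prop:dgs} together with the classification Theorem \ref{thm:classification of ot}. The key observation is that performing a Lutz twist along a transverse push-off $t(L)$ changes the homotopy class of $\xi$ in a way governed by the homology class $[L]$ and by the self-linking number (Propositions \ref{prop:d2} and \ref{prop:d3}), and that these invariants depend only on discrete data which we can correct by adding further components to the link. So first I would set $\xi_1 := \xi^{t(L)}$, the result of a Lutz twist along $t(L)$; it is again overtwisted, and by Proposition \ref{prop:d2} we have $d^2(\xi,\xi_1) = -\mathrm{pd}([L])$, together with a controlled change in $d^3$.

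Next I would choose a Legendrian link $\Lambda_0$, disjoint from $L\cup\Delta$, whose components represent a collection of homology classes that cancels $d^2(\xi,\xi^{t(L)})$ under a further Lutz twist: since $d^2$ is additive over disjoint transverse knots, performing Lutz twists along $t(L)\cup t(\Lambda_0)$ produces a contact structure $\xi_2$ with $d^2(\xi,\xi_2)=0$. (One can always realize a prescribed homology class by a Legendrian knot in the complement of $\Delta$, e.g. by a $\mathcal{C}^0$-small Legendrian realization of an embedded curve in that class, stabilized as needed to push it off $\Delta$.) Once $d^2$ vanishes, the two plane fields differ only by $d^3(\xi,\xi_2)\in H^3(M,\Z)\cong\Z$, and Proposition \ref{prop:d3} shows that adding a null-homologous transverse knot with a suitable self-linking number shifts $d^3$ by any prescribed integer. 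So I would adjoin to $\Lambda_0$ one more Legendrian component $\ell$, null-homologous and disjoint from $L\cup\Lambda_0\cup\Delta$, whose transverse push-off has self-linking number equal to $-d^3(\xi,\xi_2)$; with $\Lambda := \Lambda_0\cup\ell$, the contact structure $\xi^{t(L\cup\Lambda)}$ then satisfies $d^2(\xi,\xi^{t(L\cup\Lambda)})=0$ and $d^3(\xi,\xi^{t(L\cup\Lambda)})=0$, hence is homotopic to $\xi$ as a cooriented plane field. Since both are overtwisted, Theorem \ref{thm:classification of ot} gives that they are isotopic contact structures, which is the claim.

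The step I expect to be the main obstacle is the homotopy-theoretic bookkeeping: making sure one can simultaneously realize the required homology classes and the required self-linking correction by Legendrian representatives that are genuinely disjoint from both $L$ and the fixed overtwisted disk $\Delta$. Disjointness from $\Delta$ is delivered by working in the (open) complement $M\setminus\Delta$, but one must check that every class in $H_1(M;\Z)$ is still represented there — which it is, since removing a disk does not change $H_1$ — and that stabilizing a Legendrian knot to alter its self-linking can be done inside that complement without creating unwanted intersections. A secondary point is that the identifications $d^2(\xi,\xi^K)=-\mathrm{pd}([K])$ and $d^3(\xi,\xi^K)=sl(K)$ are stated for a single knot; I would need the additivity of these obstruction classes under Lutz twists along the several components of a link, which follows from the local nature of the Lutz twist (each component's contribution is supported in a disjoint neighborhood) together with the cocycle property of the difference classes $d^2$ and $d^3$.
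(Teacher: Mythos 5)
Your proposal is correct and follows essentially the same route as the paper: first add a Legendrian link $\widetilde L$ (your $\Lambda_0$) with $[\widetilde L]=-[L]$ to kill $d^2$, then adjoin a null-homologous Legendrian knot $K$ (your $\ell$) in a Darboux ball with the right self-linking number to kill $d^3$, and finally invoke Eliashberg's classification of overtwisted contact structures. The only difference is presentational: you flag explicitly the additivity of the obstruction classes over link components and the realizability of homology classes by Legendrian knots in the complement of $\Delta$, which the paper uses implicitly without comment.
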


\begin{proof}
Consider a Legendrian link $\widetilde L$ disjoint from $L\cup\Delta$ and with homology class $[\widetilde L]=-[L]$. Then Proposition \ref{prop:d2} implies that $d^2(\xi,\xi^{t(L\cup\widetilde L)})=0$. Let $K$ be a null--homologous knot contained in a Darboux ball with self--linking number $-d^3(\xi,\xi^{t(L\cup\widetilde L)})$. Propositions \ref{prop:d2} and \ref{prop:d3} imply that $\Lambda=\widetilde L\cup K$ satisfies $d^2(\xi,\xi^{t(L\cup\Lambda)})=0$ and $d^3(\xi,\xi^{t(L\cup\Lambda)})=0$. Theorem \ref{thm:classification of ot} concludes the statement of the Lemma.
\end{proof}

We are now almost ready to state and prove two results, Propositions \ref{prop:trivnorm} and \ref{prop:trivnormK}, that will be the two main steps in the proof of Proposition \ref{prop:symptrivnormal}. Proposition \ref{prop:trivnorm} will be an adaptation to higher dimensions of Proposition \ref{prop:dgs}. We first discuss the smooth model for it.

Denote by $M_K(\tau)$ the manifold obtained by surgery along $K\subset M$ with framing $\tau$. In case this is a contact surgery along a Legendrian knot, the notation stands for a contact ($-1$)--surgery. The following observation is a strictly differential topological statement.

\begin{lemma}\label{lem:diffsur}
Let $X$ be a smooth 5--manifold and $M$ a codimension--2 submanifold. Consider a knot $K$ in $M$ and a framing $\tau$ of $K$ in $X$. Suppose that $\tau$ restricts to a framing $\tau_s$ of $K$ in $M$. Then a surgery on $X$ along $K$ with framing $\tau$ induces a surgery on $M$ along $K$ with framing $\tau_s$.
\end{lemma}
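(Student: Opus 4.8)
The plan is to unwind the definition of surgery and simply track the submanifold $M$ through it. Recall that the framing $\tau$ is by definition a trivialization of the normal bundle $\nu_X K \cong K\times\R^4$, and it is used to identify a tubular neighborhood of $K$ in $X$ with $\S^1\times\D^4$. The hypothesis that $\tau$ restricts to a framing $\tau_s$ of $K$ in $M$ means precisely that, after writing $\R^4=\R^2\times\R^2$, the framing carries $K\times(\R^2\times\{0\})$ onto the subbundle $\nu_M K\subset \nu_X K$ of directions tangent to $M$; the resulting trivialization $\tau_s:=\tau|_{K\times(\R^2\times\{0\})}$ of $\nu_M K$ is then exactly a framing of $K$ inside $M$, and the last two framing vectors project isomorphically onto $\nu_X M|_K$. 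In these coordinates the tubular neighborhood of $K$ in $M$ is $\S^1\times\D^2\times\{0\}\subset \S^1\times\D^2\times\D^2=\S^1\times\D^4$, a neat codimension--$2$ submanifold.

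Next I would write the surgered manifold as
$$X_K(\tau)=\big(X\setminus\operatorname{int}(\S^1\times\D^4)\big)\cup_{\S^1\times\S^3}\big(\D^2\times\S^3\big),$$
the two pieces being glued along their common boundary $\S^1\times\S^3$ by the identity (this is the canonical gluing once the framing is fixed; the same convention is used for the $3$--dimensional surgery on $M$). I then exhibit the surgered copy of $M$ inside $X_K(\tau)$ piecewise: over $X\setminus\operatorname{int}(\S^1\times\D^4)$ take the submanifold $M\setminus\operatorname{int}(\S^1\times\D^2\times\{0\})$, and over $\D^2\times\S^3$ take $\D^2\times(\S^1\times\{0\})$, where $\S^1\times\{0\}=\S^3\cap(\D^2\times\{0\})$ is the equatorial circle of $\S^3=\partial(\D^2\times\D^2)$ lying in the ``$M$--plane'' $\R^2\times\{0\}$. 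The point is that these two pieces match along the gluing locus: the first meets $\S^1\times\S^3$ in $\partial(\S^1\times\D^2\times\{0\})=\S^1\times\S^1\times\{0\}$, the second meets it in $\partial(\D^2\times(\S^1\times\{0\}))=\S^1\times\S^1\times\{0\}$, and the identity gluing of $X_K(\tau)$ identifies these. Moreover the equatorial $\S^1\subset\S^3$ is literally the unit circle of $\nu_M K$, i.e. the normal circle of $K$ in $M$, so it is the same $\S^1$ that occurs in the $3$--dimensional surgery.

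Assembling the two pieces yields $M':=\big(M\setminus\operatorname{int}(\S^1\times\D^2)\big)\cup_{\S^1\times\S^1}\big(\D^2\times\S^1\big)$, which is a smoothly embedded codimension--$2$ submanifold of $X_K(\tau)$ (it has dimension $3$ inside each of the $5$--manifolds $X\setminus\operatorname{int}(\S^1\times\D^4)$ and $\D^2\times\S^3$, and the two local pictures are glued by the very map that builds $X_K(\tau)$). By construction $M'$ agrees with $M$ outside a neighborhood of $K$, and the gluing data used to build $M'$ is the restriction to the $M$--directions of that used for $X_K(\tau)$; since this restricted data is governed precisely by $\tau_s=\tau|_M$, the manifold $M'$ is diffeomorphic to $M_K(\tau_s)$. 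Hence surgery on $X$ along $K$ with framing $\tau$ restricts to surgery on $M$ along $K$ with framing $\tau_s$, which is the assertion.

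The only real work is in the last identifications: pinning down what ``$\tau$ restricts to $\tau_s$'' means and then carefully matching the various copies of $\S^1$ inside $\S^3=\partial(\D^2\times\D^2)$ — in particular checking that the equatorial $\S^1$ in the $M$--plane is the one appearing in the lower-dimensional surgery, and that the gluing diffeomorphism for the $3$--dimensional surgery is exactly the restriction of the one for the $5$--dimensional surgery. There is no conceptual obstacle here; it is a routine, if slightly fiddly, unpacking of the definition of surgery, and I would present it essentially at the level of detail above.
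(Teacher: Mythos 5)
Your proof is correct and captures the same core idea as the paper — that the compatibility $\tau|_{M}=\tau_s$ lets you track $M$ through the surgery locus — but you present it through the cut-and-paste description of surgery, writing $X_K(\tau)$ as $(X\setminus\operatorname{int}(\S^1\times\D^4))\cup_{\S^1\times\S^3}(\D^2\times\S^3)$ and exhibiting $M_K(\tau_s)$ piecewise inside it, whereas the paper runs the argument through the handle-attachment / cobordism picture: it attaches a $6$--dimensional $2$--handle $H^6\cong\D^2\times\D^4$ along $K\subset X\times\{1\}$ in $X\times[0,1]$ and observes that the attaching gradient flow restricts to $M\times[0,1]$, producing the corresponding $4$--dimensional $2$--handle $H^4\cong\D^2\times\D^2$ along $K\subset M\times\{1\}$. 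The two formalisms are equivalent, and your argument is slightly more elementary in that it avoids Morse/handle language entirely and displays the glued submanifold $M'$ by hand; the paper's version has the advantage of directly producing a \emph{cobordism} from $(X,M)$ to $(X_K(\tau),M_K(\tau_s))$, which fits the surrounding discussion where these surgeries are realised by symplectic handle attachments (Corollary \ref{coro:dgs} and Propositions \ref{prop:trivnorm}, \ref{prop:trivnormK}). Your closing remark identifying the equatorial $\S^1\subset\S^3$ in the $M$--plane matches the paper's observation that the belt $3$--sphere of $H^6$ meets $M_K$ in the belt $1$--sphere of $H^4$.
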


\begin{proof}
The statement can be seen as a consequence of the description of a surgery as a handle attachment. The gradient flow used to glue a 6--dimensional 2--handle $H^6\cong\D^2\times\D^4$ along the attaching sphere $K$ in $X\times\{1\}\subset X\times[0,1]$ restricts to a gradient flow in the submanifold $M\times\{1\}$. This describes the attachment of a 4--dimensional 2--handle $H^4\cong\D^2\times\D^2$ along $K$ in $M\times\{1\}\subset M\times[0,1]$. Note that the belt 3--sphere in the handle $H^6$ intersects the surgered submanifold $M_K$ along the belt 1--sphere of the handle $H^4$.
\end{proof}

Lemma \ref{lem:diffsur} provides the smooth model for the symplectic cobordism we shall construct to prove Proposition \ref{prop:symptrivnormal}. Proposition \ref{prop:dgs} concerns contact 3--manifolds and a 4--dimensional symplectic cobordism. In view of Lemma \ref{lem:diffsur} we can adapt Proposition \ref{prop:dgs} to the context of a codimension--2 contact submanifold in a contact 5--manifold. The result is as follows.

\begin{proposition}\label{prop:trivnorm} Let $(X,\xi_X)$ be a contact 5--manifold and $(M,\xi)$ a codimension--2 overtwisted contact submanifold. Consider a transverse knot $K$ in $M$ such that $c_1(\nu_M)=pd([\lambda_K])$ and denote $\lambda=\lambda_K\cup\lambda^\sigma_K$. Then there exists a framing $\tau$ of $\lambda$ in $(X,\xi_X)$ restricting to the Legendrian framing $\tau_s$ of $\lambda$ in $(M,\xi)$ such that $M_\lambda(\tau_s)$ is contactomorphic to $M$ with a Lutz untwist along $K$, and the symplectic normal bundle of $M_\lambda(\tau_s)$ in $X_\lambda(\tau)$ is trivial.
\end{proposition}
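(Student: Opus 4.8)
The plan is to upgrade the Ding--Geiges--Stipsicz picture of Proposition \ref{prop:dgs} from the $3$--dimensional setting to the codimension--$2$ setting, using Lemma \ref{lem:diffsur} as the differential--topological skeleton. First I would recall that performing a contact $(+1)$--surgery on $(M,\xi)$ along $L$ and $\sigma(L)$ produces $(M,\xi^K)$ (Proposition \ref{prop:dgs} with $K=t(L)$), and that the inverse operation is a contact $(-1)$--surgery along the belt spheres $\lambda_K$ and $\lambda^\sigma_K$ in $(M,\xi^K)$; this is exactly the exact symplectic cobordism of Corollary \ref{coro:dgs}, run from the concave end $(M,\xi^K)$ to the convex end $(M,\xi)$. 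The idea is that this $3$--dimensional cobordism, built by attaching Weinstein $2$--handles along the Legendrian link $\lambda=\lambda_K\cup\lambda^\sigma_K\subset(M,\xi^K)$, sits inside a $5$--dimensional cobordism: we attach a $6$--dimensional Weinstein $2$--handle $H^6\cong\D^2\times\D^4$ to (a bounded piece of the symplectization of) $X$ along the Legendrian $\lambda$, equipped with a framing $\tau$ of $\lambda$ in $(X,\xi_X)$ chosen to restrict to the Legendrian framing $\tau_s$ of $\lambda$ in $(M,\xi)$. By Lemma \ref{lem:diffsur} the trace of this $6$--handle intersects the surgered submanifold exactly in the trace of the $4$--handle attachment along $\lambda$ in $M$, so $M_\lambda(\tau_s)$ is the result of the $(+1)$--surgery undone, i.e. contactomorphic to $(M,\xi)$ (equivalently, $M$ with a Lutz untwist along $K$, since $\xi^K$ is $\xi$ Lutz-twisted along $K$).

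The new content is the normal bundle computation. The symplectic normal bundle $\nu'$ of $M_\lambda(\tau_s)$ inside $X_\lambda(\tau)$ agrees with the old symplectic normal bundle $\nu_M$ of $M$ in $X$ away from the handle, and is modified only along the cores of the two attached $2$--handles, i.e. along disks bounding $\lambda_K$ and $\lambda^\sigma_K$. I would compute $c_1(\nu')$ by the usual clutching/Mayer--Vietoris argument: the change in the Euler class is controlled by how the normal framing $\tau$ of the attaching region compares to the Legendrian (contact) framing, which for a Weinstein $2$--handle attached along a Legendrian with the canonical framing is precisely the data that lets one prescribe the relative Chern class. Concretely, one is free to choose $\tau$ (extending $\tau_s$) so that attaching the handle along $\lambda_K$ subtracts $\mathrm{pd}([\lambda_K])$ from $c_1$ of the normal bundle (and the $\lambda^\sigma_K$--handle is arranged to contribute nothing, since it can be taken null-homologous in a ball, cf. the role of $\sigma(L)$). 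Combined with the hypothesis $c_1(\nu_M)=\mathrm{pd}([\lambda_K])$, this gives $c_1(\nu')=0$; since $\nu'$ is an oriented rank--$2$ (hence complex line) bundle over a $3$--manifold, vanishing of $c_1$ forces triviality.

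The main obstacle I anticipate is the simultaneous bookkeeping of \emph{two} framings along $\lambda$: the contact/Legendrian framing inside $M$ (which we must match so that the induced $3$--dimensional surgery is the DGS one of Proposition \ref{prop:dgs}, and so the ambient handle is isotropic in the right way) and the framing in the normal directions (which we want to tune to kill $c_1(\nu_M)$). One has to check these two prescriptions are compatible — that the space of framings $\tau$ of $\lambda$ in $X$ restricting to $\tau_s$ is large enough to realize the required relative Chern class, and that the resulting handle attachment is still a genuine Weinstein (isotropic) attachment so that $X_\lambda(\tau)$ carries an exact symplectic cobordism structure with $X$ as its convex end. The first point is a $\pi_1(U(2))\cong\Z$ count done loop-by-loop on the components of $\lambda$; the second uses that a Weinstein handle attached along a Legendrian sphere depends (up to the data we care about) only on the Legendrian isotopy class plus the normal framing, so any admissible $\tau$ yields a legitimate cobordism. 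Lemma \ref{lem:h} ($[K]=[L]=[\lambda_K]$) is what lets us phrase the hypothesis in terms of $[\lambda_K]$ and thereby identify the class we must cancel.
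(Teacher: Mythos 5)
Your overall strategy is right and essentially parallel to the paper's: upgrade the Ding--Geiges--Stipsicz surgery to codimension~$2$ via Lemma~\ref{lem:diffsur}, keep the restriction to $M$ equal to $\tau_s$ so that the induced $3$--dimensional surgery is the DGS one, and use the hypothesis $c_1(\nu_M)=\mathrm{pd}([\lambda_K])$ to kill the symplectic normal bundle's Euler class. However, the core step is left as an assertion, and one of your subsidiary claims is false.

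The main gap is the sentence ``one is free to choose $\tau$ (extending $\tau_s$) so that attaching the handle along $\lambda_K$ subtracts $\mathrm{pd}([\lambda_K])$ from $c_1$ of the normal bundle.'' You flag the issue yourself (the $\pi_1$ count ``done loop-by-loop''), but you never carry it out, and it is precisely here that the hypothesis $c_1(\nu_M)=\mathrm{pd}([\lambda_K])$ must enter. The paper replaces this hand-waving by an explicit construction: since $c_1(\nu_M)=\mathrm{pd}([\lambda_K])$, there exists a section $\s\colon M\to\nu_M$ transverse to the zero section with $Z(\s)=\lambda_K$. The framing along $\lambda_K$ is then defined to be $\tau=(\tau_s,\s_*\tau_s)$, using the \emph{differential} of $\s$ (the section itself vanishes on $\lambda_K$). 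In the local model the pulled-back section reads $r w_1$ on $\S^1\times\D^2\times\D^2$, and after the surgery (replacing $\S^1\times\{0\}\times\{0\}$ by $\{0\}\times\S^3$) it can be replaced by $\rho(r)w_1$ with $\rho>0$, which is nowhere zero. Thus the triviality of $\nu'$ after the $\lambda_K$--surgery is exhibited by an honest non-vanishing section, not deduced by a Mayer--Vietoris count; this is both sharper and makes transparent exactly how the hypothesis is used.

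Your parenthetical ``(and the $\lambda^\sigma_K$--handle is arranged to contribute nothing, since it can be taken null-homologous in a ball, cf. the role of $\sigma(L)$)'' is incorrect: $\sigma(L)$ is a Legendrian push-off of $L$, so $[\lambda^\sigma_K]=[\sigma(L)]=[L]=[\lambda_K]$, which is generally nonzero (it is Poincar\'e dual to $c_1(\nu_M)$). The paper handles the second surgery differently and correctly: \emph{after} the $\lambda_K$--surgery the normal bundle of $M_{\lambda_K}(\tau_s)$ in $X_{\lambda_K}(\tau)$ is already trivial, so it admits a global framing $\tau_\nu$; using $\{\tau_s,\tau_\nu|_{\lambda^\sigma_K}\}$ as the framing for the $\lambda^\sigma_K$--surgery then manifestly preserves triviality, with no homological hypothesis on $\lambda^\sigma_K$ needed. (This is the same mechanism later isolated as Proposition~\ref{prop:trivnormK}.) You should replace the ``null-homologous in a ball'' claim by this two-stage argument: first trivialize via the vanishing-locus section, then preserve triviality via a global frame.
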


\begin{proof}
The contact ($-1$)--surgery that occurs on the contact 3--manifold $(M,\xi)$ is the procedure described in Proposition \ref{prop:dgs} and Corollary \ref{coro:dgs}. It suffices to explain the choice of framing $\tau$ for the link $\lambda$ in $X$. The Legendrian framing $\tau_s$ for $\lambda$ in $(M,\xi)$ is extended to a framing $\tau$ for $\lambda$ in $X$. This extension is obtained as follows.

Consider a section $\s:M\longrightarrow\nu_M$ transverse to the $0$--section and such that
$$\lambda_K=Z(\s),\mbox{ where }Z(\s)=\{p\in M:\s(p)=0\}.$$
This section exists since $c_1(\nu_M)=pd([\lambda_K])$. It is used to define the extension of the Legendrian framing $\tau_s$ to $\tau$.
Let us discuss in detail this and the effect of the surgery. It can be considered in two stages.

First, surgery along the Legendrian link $\lambda_K$. The required framing along $\lambda_K$ is defined to be $\tau=(\tau_s,\s_*\tau_s)$. Thus $\tau$ is constructed using the differential $\s_*$ of the section $\s$. The section $\s$ cannot be used since it vanishes along $\lambda_K$. Consider polar coordinates $(r,w_1,w_2)\in\D^4\subset\C^2$ with $(w_1,w_2)\in\S^3$. The framing $\tau$ provides a diffeomorphism
$$f_\tau:\S^1\times\D^2\times\D^2\longrightarrow \SU(\lambda_K)\subset X,\quad (\theta;r,w_1,w_2)\longmapsto f_\tau(\theta;r,w_1,w_2)$$
and we can suppose that $f_\tau(\S^1\times\D^2\times\{0\})=\SU(\lambda_K)\cap M$, for a neighborhood $\SU(\lambda_K)$ of $\lambda_K\subset X$. The differential $\s_*$ identifies the pull--back $f_\tau^*(\nu_M)$ of the normal bundle with the trivial bundle $\C\longrightarrow\S^1\times\D^2\times\{0\}$ over a neighborhood of $\lambda_K\subset M$. We can also suppose that the section $\s$ in these local coordinates is $((f^\tau)^*\s)(\theta;r,w_1)=rw_1$. The function $rw_1$ is well--defined although the coordinate $w_1$ is not well--defined at $r=0$.

The surgery substitutes the core $\lambda_K\cong\S^1\times\{0\}\times\{0\}\subset\S^1\times\D^2\times\D^2$ with coordinates $(\theta;r,w_1,w_2)$ by $\{0\}\times\S^3\subset\D^2\times\S^3$ with coordinates $(r,\theta;w_1,w_2)$ along the common boundary $\S^1\times\S^3=\{(\theta;w_1,w_2)\}$. The section $((f^\tau)^*\s)(\theta;r,w_1)=rw_1$ can be substituted by a section of the form
$$g:\D^2\times\S^3\longrightarrow\C,\quad (r,\theta;w_1,w_2)\longmapsto g(r,\theta;w_1,w_2)=\rho(r)w_1$$
where $\rho:\R\longrightarrow\R^+$ is a positive smooth function. In particular it is non--vanishing and provides a trivialization of the normal bundle of the surgered submanifold $M_{\lambda_K}(\tau_s)$ in the surgered manifold $X_{\lambda_K}(\tau)$.

Second, surgery along the Legendrian link $\lambda^\sigma_K$. The manifold $M_{\lambda_K}(\tau_s)$ has trivial normal bundle in $X_{\lambda_K}(\tau)$. Thus there exists a global framing $\tau_{\nu}$ of this normal bundle. Denote the restriction of this global framing $\tau_{\nu}$ to the Legendrian knot $\lambda^\sigma_K$ by $\tau_{\nu}|_{\lambda^\sigma_K}$. Then the framing $\{\tau_s, \tau_{\nu}|_{\lambda^\sigma_K}\}$ is a framing of the normal bundle of $\lambda^\sigma_K$ inside $X_{\lambda_K}(\tau)$. 
Thus, once the surgery along $\lambda^\sigma_K$ is performed with the framing $\{\tau_s, \tau|_{\lambda^\sigma_K} \}$, the resulting normal bundle is still trivial.
Hence the normal bundle of $M_\lambda(\tau_s)$ in $X_\lambda(\tau)$ is trivial.
\end{proof}

A minor modification of the argument for Proposition \ref{prop:trivnorm} yields the following result.

\begin{proposition}\label{prop:trivnormK} Let $(X,\xi_X)$ be a contact 5--manifold and $(M,\xi)$ a codimension--2 overtwisted contact submanifold with trivial normal bundle. Consider a transverse knot $K$ in $M$ and denote $\lambda=\lambda_K\cup\lambda^\sigma_K$. Then there exists a framing $\tau$ of $\lambda$ in $(X,\xi_X)$ restricting to the Legendrian framing $\tau_s$ of $\lambda$ in $(M,\xi)$ such that $M_\lambda(\tau_s)$ is contactomorphic to $M$ with a Lutz untwist along $K$ and the symplectic normal bundle of $M_\lambda(\tau_s)$ in $X_\lambda(\tau)$ is trivial.
\end{proposition}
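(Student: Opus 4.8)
The plan is to repeat the proof of Proposition~\ref{prop:trivnorm} almost line for line. Inspecting that argument, one sees that the hypothesis $c_1(\nu_M)=pd([\lambda_K])$ is used only once: to produce a section $\s$ of the symplectic normal bundle $\nu_M$ vanishing transversally exactly along $\lambda_K$, which is then used, via its differential $\s_*$, to extend the Legendrian framing $\tau_s$ of $\lambda_K$ in $M$ to a framing $\tau$ of $\lambda_K$ in $X$ and to produce, after surgery, the trivializing section $g=\rho(r)w_1$. Under the present hypothesis $\nu_M$ is already trivial, so this input is replaced by a simpler one: fix once and for all a global nowhere-vanishing section $\s$ of $\nu_M$, that is, a global trivialization.

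With this in hand the steps are the following. First, exactly as in Proposition~\ref{prop:trivnorm}, the relevant operation on the contact $3$--manifold $(M,\xi)$ is the one described in Proposition~\ref{prop:dgs} and Corollary~\ref{coro:dgs}: a surgery along the Legendrian link $\lambda=\lambda_K\cup\lambda^\sigma_K$ with the Legendrian framing $\tau_s$, whose effect is to perform a Lutz untwist along $K$. By Lemma~\ref{lem:diffsur}, any surgery on $X$ along $\lambda$ with a framing $\tau$ restricting to $\tau_s$ on $M$ induces precisely this $3$--dimensional surgery on $M$; hence it suffices to choose such a $\tau$ for which the symplectic normal bundle $\nu_{M_\lambda(\tau_s)}$ is trivial inside $X_\lambda(\tau)$. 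To this end, for each component $\lambda_i$ of $\lambda$ choose the tubular neighborhood coordinates $\S^1\times\D^2\times\D^2\to\SU(\lambda_i)\subset X$ (with $\S^1\times\D^2\times\{0\}$ mapping into $M$) so that $\s$ is the constant section of the second $\D^2$--factor; equivalently, let $\tau$ be $\tau_s$ adjoined with the trivialization of $\nu_M|_{\lambda_i}$ determined by $\s|_{\lambda_i}$. Since $\s$ does not vanish, the correction $rw_1\rightsquigarrow\rho(r)w_1$ needed in Proposition~\ref{prop:trivnorm} has no analogue here: after the surgery the constant section of the second $\D^2$--factor extends unchanged across the new region $\D^2\times\S^3$ and restricts there to a nowhere-vanishing section of $\nu_{M_\lambda(\tau_s)}$ over the new region $\D^2\times\S^1\subset M_\lambda(\tau_s)$. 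Glued to $\s$ over the complement of the surgery locus, this produces a global nowhere-vanishing section of $\nu_{M_\lambda(\tau_s)}$, hence the desired trivialization. Because $\s$ has no zeros there is no longer any need to separate the surgery along $\lambda_K$ from the one along $\lambda^\sigma_K$; both may be handled simultaneously with the same $\s$, although the two--stage presentation of Proposition~\ref{prop:trivnorm} works verbatim as well.

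The one point requiring care --- and the main, rather mild, obstacle --- is the compatibility bookkeeping near $\lambda$: one must check that the tubular neighborhood coordinates on $X$ can be chosen so that the fixed global trivialization of $\nu_M$ coincides, near each $\lambda_i$, with the coordinate normal direction to $M$, so that on one hand $\tau$ genuinely restricts to $\tau_s$ on $M$ (so that Lemma~\ref{lem:diffsur} applies) and on the other hand the constant section indeed extends across $\D^2\times\S^3$. This is exactly the role played in the proof of Proposition~\ref{prop:trivnorm} by $\s_*$ and by the replacement section $g=\rho(r)w_1$; here it is strictly easier, since $\s$ is a genuine trivialization rather than a section with a nonempty zero set, and that is what makes the modification only a minor one.
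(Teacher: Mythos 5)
Your proof is correct and follows the same approach as the paper's: since $\nu_M$ is trivial one fixes a nowhere-vanishing section $\s$, uses the framing it induces (no $\s_*$ needed), and observes that surgery with this framing preserves triviality of the normal bundle; this is precisely the paper's two-line argument, referring to the ``second part'' of the proof of Proposition~\ref{prop:trivnorm}. Your additional remarks on the compatibility of tubular-neighborhood coordinates with $\s$ and on collapsing the two-stage surgery into one are sound and merely make explicit what the paper leaves implicit.
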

\begin{proof}
In this case there is no need to use $\s_*$ since the section $\s$ can be chosen to be non--vanishing. Thus we choose the framing described in the second part of the surgery in Proposition \ref{prop:trivnorm}. Id est, the framing induced by $\s$. The surgery along $\lambda$ with this framing preserves the triviality of the normal bundle.
\end{proof}

We are now ready to prove Proposition \ref{prop:symptrivnormal}.

Let $(M,\xi)$ be an overtwisted contact 3--manifold. We want to show that there is a contact embedding with trivial symplectic normal bundle of $(M,\xi)$ into an exact symplectially fillable contact 5--manifold.

Fix an overtwisted disc $\Delta$ in $(M,\xi)$ and take a Legendrian link $L$ in $(M,\xi)$ which is disjoint from $\Delta$ and such that $pd([L]) = c_1(\xi)$. By Lemma \ref{lem:formLutz} we know that there exists a Legendrian link $\Lambda$ in $(M,\xi)$ disjoint from $L$ and $\Delta$ and such that $\xi$ is isotopic to $\overline{\xi} := \xi^{t(L \cup\Lambda)}$. Consider a contact embedding $(M,\overline{\xi}) \longrightarrow \S T^{\ast}M$ defined by some contact form $\overline{\alpha}$ for $\overline{\xi}$. The symplectic normal bundle of this embedding is isomorphic to $\overline{\xi}$ and hence to $\xi$. Note that $L$ and $\Lambda$ are still Legendrian in $(M,\overline{\xi})$. Consider the transverse push--offs, with respect to $\overline{\xi}$, $K=t(L)$ and $\kappa = t(\Lambda)$.

First, we apply Proposition \ref{prop:trivnorm} to $(M,\overline{\xi})$ inside $(X,\xi_X) := \S T^{\ast}M$, and $K=t(L)$. We can apply it because the symplectic normal bundle of $(M,\overline{\xi})$ inside $(X,\xi_X)$ is $\overline{\xi}$ and we know that
$$
c_1(\overline{\xi}) = c_1(\xi) = pd([L]) = pd([K]) = pd([\lambda_K]).
$$
The last equality holds by Lemma \ref{lem:h}. After applying Proposition \ref{prop:trivnorm} we get contact structures $\overline{\xi}'$ on $M$ and $\xi_X'$ on $X$ such that $(M,\overline{\xi}')$ embeds into $(X,\xi_X')$ with trivial symplectic normal bundle, and $\overline{\xi}'$, $\xi_X'$ are obtained from $\overline{\xi}$, $\xi_X$ by performing a Lutz untwist along $K$.

Second, consider $\kappa = t(\Lambda)$ as a transverse link in $(M,\overline{\xi}')$ and apply Proposition \ref{prop:trivnormK} to $(M,\overline{\xi}')$ inside $(X,\xi_X')$ and $\kappa = t(\Lambda)$. We obtain contact structures $\overline{\xi}''$ on $M$ and $\xi_X''$ on $X$ such that $(M,\overline{\xi}'')$ embeds into $(X,\xi_X'')$ with trivial symplectic normal bundle and $\overline{\xi}''$, $\xi_X''$ are obtained from $\overline{\xi}'$, $\xi_X'$ by performing a Lutz untwist along $\kappa$. 

Recall that $\overline{\xi}$ was obtained from $\xi$ by performing a Lutz twist along $K \cup \kappa$. We have thus that $\overline{\xi}''$ and $\xi$ are in the same homotopy class. Since the overtwisted disk has not been affected by the previous operations, Theorem \ref{thm:classification of ot} implies that the two contact structures $\overline{\xi}''$ and $\xi$ are actually isomorphic. We have thus obtained an embedding 
$$
(M,\xi) \longrightarrow (X,\xi_X'')
$$
with trivial symplectic normal bundle. Since $(X,\xi_X)$ is exact symplectically fillable and $\xi_X''$ is obtained from $\xi_X$ by two Lutz untwists, it follows from Corollary \ref{coro:dgs} and the discussion after it that $(X,\xi_X'')$ is still exact symplectically fillable. This finishes the proof of Proposition \ref{prop:symptrivnormal} and hence the proof of Theorem \ref{thm:otnotsmall} in the general case.


\begin{thebibliography}{xxxx}

\bibitem[Ar]{Ar} Mathematical methods of classical mechanics, \textit{Graduate Texts in Mathematics} 60, Springer-Verlag, New York, 1989. 


\bibitem[Cha95]{Ch} M. Chaperon, On generating families, \textit{The Floer Memorial Volume}, Basel, Birkh\"auser. Prog. Math. 133 (1995), 283--296.

\bibitem[Che96]{Che} Y. Chekanov, Critical points of quasifunctions, and generating families of Legendrian manifolds, \textit{Funct. Anal. Appl.} \textbf{30} (1996), 118--128.

\bibitem[CN08]{CN} V. Chernov and S. Nemirovsky, Legendrian links, causality, and the Low conjecture, \textit{arXiv} 0810.5091.

\bibitem[CFP10]{CFP} V. Colin, E. Ferrand and P. Pushkar, Positive isotopies of Legendrian submanifolds and applications, \textit{arXiv} 1004.5263.

\bibitem[CS]{CS} V. Colin and S. Sandon, The discriminant and oscillation lengths for contact and Legendrian isotopies, to appear in \textit{J. Eur. Math. Soc.}.

bundles, \textit{Algebr. Geom. Topol.} \textbf{1} (2001), 153--172.

\bibitem[DGS05]{DGS} F. Ding, H. Geiges, A.I. Stipsicz, Lutz Twist and Contact Surgery, \textit{Asian J. Math.} \textbf{9} (2005) 57--64.

\bibitem[EH02]{EH} J. Etnyre, K. Honda, Tight contact structures with no symplectic fillings, \textit{Invent. Math.} \textbf{148} (2002) 609--626.

\bibitem[El89]{E1} Y. Eliashberg, Classification of overtwisted contact structures on 3--manifolds, \textit{Invent. Math.} \textbf{98} (1989), 623--637.

\bibitem[El90]{E90} Y. Eliashberg, Topological characterization of Stein manifolds of dimension > 2, \textit{Internat. J. Math.} \textbf{1} (1990), 29--46.

\bibitem[El92]{E2} Y. Eliashberg, Contact $3$--manifolds twenty years since J. Martinet's work, \textit{Ann. Inst. Fourier} \textbf{46} (1992), 165--192.

\bibitem[El89]{E3} Y. Eliashberg, Filling by holomorphic discs and its applications, \textit{Geometry of low--dimensional manifolds 2} (1989), LMS Lect. Not. Ser. 151, 45--67.

\bibitem[EKP06]{EKP}  Y. Eliashberg, S.S. Kim, and L. Polterovich, Geometry of contact transformations and domains:
orderability versus squeezing, \textit{Geom. Topol.} \textbf{10} (2006), 1635--1747.

\bibitem[EP00]{EP} Y. Eliashberg and L. Polterovich, Partially ordered groups and geometry of contact transformations,
\textit{Geom. Funct. Anal.} \textbf{10} (2000), 1448--1476.

\bibitem[Ge]{Ge} H. Geiges, An Introduction to Contact Topology, Cambr. Studies in Adv. Math. 109, Cambr. Univ. Press 2008.

\bibitem[Gi09]{Gi} E. Giroux, Sur la g\'eom\'etrie et la dynamique des transformations de contact, \textit{S\'eminaire Bourbaki} (2009), n.1004.

\bibitem[Gr85]{Gr} M. Gromov, Pseudo holomorphic curves in symplectic manifolds, \textit{Invent. Math.} \textbf{82} (1985) 307--347.

\bibitem[Ka13]{Ka} N. Kasuya, On codimension two contact embeddings, \textit{Proc. Geom. and Foliations} (2013) 95--101.

\bibitem[Le04]{Le} E. Lerman, Contact fiber bundles, \textit{J. Geom. Phys.} \textbf{49} (2004), 52--66.

\bibitem[Lu70]{Lu} R. Lutz, Sur l'existence de certaines formes diff\'erentielles remarquables sur la sph\`ere $\S^3$, \textit{C.R. Acad. Sci. Paris Sér.} \textbf{270} (1970) 1597--1599.

\bibitem[Ma71]{Ma} J. Martinet, Formes de contact sur les vari\'et\'es de dimension 3. \textit{Proc. Liverpool Sing. Symp. II, Lect. Notes in Math. 209} (1971), 142--163.

\bibitem[MNW13]{MNW} P. Massot; K. Niederkruger; C. Wendl, Weak and strong fillability of higher dimensional contact manifolds. \textit{Invent. Math.} \textbf{192} (2013), 287--373.

\bibitem[Ni06]{Ni} K. Niederkr\"uger, The plastikstufe -- a generalization of the overtwisted disk to higher dimensions, \textit{Alg. Geom. Top.} \textbf{6} (2006) 2473--2508.

\bibitem[NP10]{NP} K. Niederkr\"uger; F. Presas,  Some remarks on the size of tubular neighborhoods in contact topology and fillability. \textit{Geom. Topol.} \textbf{14} (2010), 719--754.

\bibitem[Pr07]{Pr} F. Presas, A class of non--fillable contact structures, \textit{Geom. Top.} \textbf{11} (2007), 2203--2225.

\bibitem[Sa13]{S} S. Sandon, A Morse estimate for translated points of contactomorphisms of spheres and projective spaces, \textit{Geom. Dedicata} \textbf{165} (2013), 95--110.

\bibitem[We91]{W} A. Weinstein, Contact surgery and symplectic handlebodies, \textit{Hokkaido Math. J.} \textbf{20} (1991), 241--251.

\end{thebibliography}
\end{document}